\providecommand{\U}[1]{\protect\rule{.1in}{.1in}}
\newtheorem{theorem}{Theorem}
\newtheorem{definition}[theorem]{Definition}
\newtheorem{lemma}[theorem]{Lemma}
\newtheorem{remark}[theorem]{Remark}
\newenvironment{proof}[1][Proof]{\noindent\textbf{#1.} }{\ \rule{0.5em}{0.5em}}
\newdimen\dummy
\newcommand{\xdois}{\mathbb{X}^2}
\newcommand{\vp}{\mathbb{V}^p}
\newcommand{\deln}{\partial_{\overrightarrow{n}}}
\newcommand{\gl}{\left\langle} 
\newcommand{\gr}{\right\rangle}
\newcommand{\R}{\ensuremath{\mathbb{R}}}
\newcommand{\N}{\ensuremath{\mathbb{N}}}
\begin{document}

\title{Continuity of Selected Pullback Attractors}
\author{Rodrigo Samprogna$^{1}$, Jacson Simsen$^{2}$
\\$^{1}${\small Instituto de Ci\^{e}ncia e Tecnologia, Universidade Federal de
Alfenas, Campus de Po\c{c}os de Caldas,  Rod. Jos\'{e} Aur\'{e}lio Vilela, n. 11.999, Cidade universitária, 37715-400,
Po\c{c}os de Caldas - MG - Brazil.}
\\$^{2}${\small Instituto de
Matem\'{a}tica e Computa\c c\~ao, Universidade Federal de
Itajub\'{a}, Av. BPS n. 1303, Bairro Pinheirinho, 37500-903,
Itajub\'{a} - MG - Brazil.}
\\$^{1}${\small E-mail: rodrigo.samprogna@unifal-mg.edu.br}
\\$^{2}${\small E.mail: jacson@unifei.edu.br}}
\date{}
\maketitle

\vspace{2cm}
\begin{abstract} In this work we obtain theoretical results on continuity of selected pullback attractors and we apply them to reaction diffusion equations with dynamical boundary conditions. 
\end{abstract}

\textbf{Keywords:} Selected pullback attractors, continuity of attractors; reaction-diffusion equations; dynamic boundary conditions.  

\textbf{AMS 2010 Subject Classification: } Primary: 35B41; 35K55; 35K57\\
Secondary: 35B40; 35K92.

\section{Introduction}\label{Introduction} The study of the asymptotic behavior of infinite dimensional evolution problems is connected with the existence of  attractors for semigroups (or processes for nonautonomous problems) associated with partial differential equations. Sometimes the uniqueness of solutions of the Initial Value Problem fails or it is not known to hold. In this case one have to use the theory of multivalued semigroups (or multivalued processes for nonautonomous problems).

For some problems it is not possible to guarantee the existence of global attractors or pullback attractors. For this reason, some researchers had introduced different weaker concepts of attractors. Simsen-Gentile in \cite{SG} had introduced the notion of $\varphi-$attractor for a multivalued semigroup defined by a generalized semiflow. Kapustyan-Pankov-Valero in \cite{KPV,KP} considered 3D-B\'enard systems which is a model in hydrodynamics and describes
the behaviour of the velocity, the pressure and the temperature of an incompressible fluid.These systems, in general, do not have a global attractor because of the lack of good dissipativity estimates for all weak solutions. However, they proved the existence of a $\varphi-$attractor for the 3D-B\'enard systems. See also \cite{KV,KMV}. Caraballo-Kloeden-Mar\'in--Rubio in \cite{CKM03} introduced the concept of a weak pullback attractor and Samprogna-Simsen in \cite{spa} introduced the concept of Selected pullback attractors. 

In this paper we develop some abstract results and apply them to a $p-$Laplacian evolution equation without guarantee of uniqueness of solution. In the Section~\ref{Selected Pullback Attractor}, for the completeness of the work, we revise some concepts and results from \cite{spa}. Section~\ref{Continuity of Selected Pullback Attractors} present new theoretical results on continuity of selected pullback attractors. The last section is devoted to give an application to a $p-$Laplacian reaction-diffusion equation with dynamic boundary conditions.

\section{Selected Pullback Attractor}\label{Selected Pullback Attractor}

Let $(X,d)$ be a complete metric space. For $x\in X$, $A,B\subset X$ and $\epsilon>0$ we define
\begin{align*}
& d(x,A):=\inf_{a\in A}\{d(x,a)\}; \\
& dist(A,B):=\sup_{a\in A}\inf_{b\in B}\{d(a,b)\}; \\
& dist_H(A,B):=\max\{dist(A,B),dist(B,A)\}; \\
& \mathcal{O}_{\epsilon}(A):=\{z\in X;d(z,A)<\epsilon\}.
\end{align*} 
Denote by $\mathcal{P}(X)$, $\mathcal{B}(X)$ and $\mathcal{K}(X)$ the nonempty, nonempty and bounded and nonempty and compact subsets of $X$, respectively.

\begin{definition}[Generalized Process]\label{processo generalizado} A \textbf{generalized process}  $\mathscr{G}=\{\mathscr{G}(\tau)\}_{\tau\in\R}$ in $X$ is a family of sets $\mathscr{G}(\tau)$ consisting of functions $\varphi:[\tau,\infty)\rightarrow X$, called solutions, satisfying the following conditions:
\begin{enumerate}
\item[(C1)] For each $\tau\in\R$ and $z\in X$ there exists at least one $\varphi\in\mathscr{G}$ with $\varphi(\tau)=z$;
\item[(C2)] If $\varphi\in\mathscr{G}(\tau)$ and $s\geq 0$, then $\varphi^{+s}\in\mathscr{G}(\tau+s)$, where $\varphi^{+s}:=\varphi_{|[\tau+s,\infty)}$;
\item[(C3)] If $\{\varphi_j\}_{j\in\N}\subset\mathscr{G}(\tau)$ and $\varphi_j(\tau)\rightarrow z$, there is a subsequence $\{\varphi_{j_k}\}_{k\in\N}$ of $\{\varphi_j\}_{j\in\N}$ and $\varphi\in\mathscr{G}(\tau)$ with $\varphi(\tau)=z$ and such that $\varphi_{j_k}(t)\rightarrow\varphi(t)$ when $k\rightarrow\infty$, for each $t\geq \tau$.
\end{enumerate}
\end{definition}

Whether $\mathscr{G}$ is formed by continuous functions we call this process a continuous process.

\begin{definition}
A generalized process $\mathscr{G}=\{\mathscr{G}(\tau)\}_{\tau\in\R}$ is said to be \textbf{locally uniformly upper semicontinuous (LUUS)} if it satisfies the following condition:
\begin{enumerate}
\item[(C4)] If $\{\varphi_j\}_{j\in\N}$ is a sequence such that $\varphi_j\in  \mathscr{G}(\tau)$ and $\varphi_j(\tau)\to z$, then there is $\varphi\in \mathscr{G}(\tau)$ with $\varphi(\tau)=z$ and subsequence $\{\varphi_{j_k}\}_{k\in\N}$ such that $\varphi_{j_k}\rightarrow \varphi$ uniformly on compact subsets of $[\tau,+\infty)$ when $k\to\infty$.
\end{enumerate}
\end{definition}

\begin{definition}
 We say that a generalized process $\mathscr{G}=\{\mathscr{G}(\tau)\}_{\tau\in\R}$ is \textbf{exact (or strict)} if it satisfies the following condition:
\begin{enumerate}
\item[(C5)] (Concatenation) Let $\varphi\in\mathscr{G}(\tau)$ and $\psi\in\mathscr{G}(r)$ such that $\varphi(s)=\psi(s)$ for some $s\geq r\geq\tau$. If $\theta$ is defined by 
\begin{equation*}
\theta(t):=\left\{
\begin{array}{ll}
\varphi(t), & t\in[\tau,s], \cr
\psi(t), & t\in(s,\infty),
\end{array}
\right.
\end{equation*}
then $\theta\in \mathscr{G}(\tau)$.
\end{enumerate}

\end{definition}

\begin{definition}
We say that there exists a \textbf{complete orbit} through $x\in X$ at $\tau\in\mathbb{R}$ if there is a map $\psi:\mathbb{R}\to X$ with $\psi(\tau)=x$ and for all $s\in\mathbb{R}$, $\psi_{|[\tau+s,\infty)}\in\mathscr{G}(\tau+s)$.
\end{definition}

We refer the reader to \cite{SimCap,SimsenValero} for more details on generalized process theory.

\begin{definition}
Let $\mathcal{A}=\{\mathcal{A}(t)\}_{t\in\mathbb{R}}$ be a family of sets. We say that $\mathcal{A}$ \textbf{select pullback attracts} an element $x\in X$ at time $t\in\mathbb{R}$ if given $\varepsilon>0$ there is $\tau_0\leq t$ such that for all $\tau\leq\tau_0$ there is $\varphi_{\tau}\in\mathscr{G}(\tau)$ with $\varphi_{\tau}(\tau)=x$ and 
$$\varphi_{\tau}(t)\in\mathcal{O}_{\varepsilon}(\mathcal{A}(t)).$$    
\end{definition}

\begin{definition}
Let $\mathcal{A}=\{\mathcal{A}(t)\}_{t\in\mathbb{R}}$ be a family of subsets of $X$. We say that $\mathcal{A}$ is \textbf{quasi-invariant} if for each $z\in \mathcal{A}(\tau)$ for some $\tau \in\mathbb{R}$, there exists a complete orbit $\psi$ through $z$ at $\tau$ with $\psi(t)\in \mathcal{A}(t)$ for all $t\in\mathbb{R}$.     
\end{definition}

\begin{remark}
Quasi-invariance is named as weak invariance in \cite{CKM03}.
\end{remark}

\begin{definition}\label{Ku}
We say that a generalized process $\mathscr{G}$ has the \textbf{uniform selected K-property} if there exist a family of compact sets $\{K(t)\}_{t\in\mathbb{R}}$ such that given $x\in X$ for each $\tau\in\mathbb{R}$ there is $\varphi_{\tau}\in\mathscr{G}(\tau)$ with $\varphi_{\tau}(\tau)=x$, and this family of solutions $\{\varphi_{\tau}\}_{\tau\in\mathbb{R}}$ has the following property: for each $t\in\mathbb{R}$ there is $\tau_0\leq t$ and sequence $\{\varepsilon_{\tau}(t)\}_{\tau\leq\tau_0}$ such that 
$$\varphi_{\tau}(t)\in\mathcal{O}_{\varepsilon_{\tau}}(K(t)), \ \tau\leq \tau_0,$$
with $\varepsilon_{\tau}\to 0$ as $\tau\to -\infty$. 
\end{definition}

\begin{theorem}\label{teo25}[\cite{spa},Theorem 25] 
If the generalized process $\mathscr{G}$ possess the uniform selected $K$-property, then it have a quasi-invariant global select pullback attractor $\mathcal{A}=\{\mathcal{A}(t)\}_{t\in\mathbb{R}}$.    
\end{theorem}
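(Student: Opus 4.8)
The plan is to build the attractor directly from the selected families furnished by the uniform selected $K$-property together with the compact sets $\{K(t)\}$ of Definition~\ref{Ku}. For each $x\in X$ fix the family $\{\varphi_\tau^x\}_{\tau\in\mathbb{R}}$ with $\varphi_\tau^x(\tau)=x$ given by that definition, and set, for every $t\in\mathbb{R}$,
$$\mathcal{A}(t):=\{\,y\in X:\ \exists\,x_k\in X,\ \tau_k\to-\infty,\ \varphi_{\tau_k}^{x_k}(t)\to y\,\},$$
the set of all subsequential limits, as the initial time recedes to $-\infty$, of selected solutions evaluated at $t$. First I would record the elementary consequences of Definition~\ref{Ku}: since $\varphi_\tau^x(t)\in\mathcal{O}_{\varepsilon_\tau}(K(t))$ with $\varepsilon_\tau\to 0$ and $K(t)$ is compact, every such sequence $\{\varphi_{\tau_k}^{x_k}(t)\}$ is precompact and each of its limits lies in $K(t)$; hence $\mathcal{A}(t)$ is nonempty and $\mathcal{A}(t)\subset K(t)$. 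A short diagonal argument shows $\mathcal{A}(t)$ is closed, so it is a closed subset of the compact set $K(t)$ and therefore compact.

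Next I would verify the selection attraction. Fixing $x\in X$ and $t\in\mathbb{R}$, I claim $d(\varphi_\tau^x(t),\mathcal{A}(t))\to 0$ as $\tau\to-\infty$. Otherwise there are $\varepsilon>0$ and $\tau_n\to-\infty$ with $d(\varphi_{\tau_n}^x(t),\mathcal{A}(t))\ge\varepsilon$; but $\{\varphi_{\tau_n}^x(t)\}$ is precompact, so along a subsequence it converges to some $y$, and by construction $y\in\mathcal{A}(t)$, a contradiction. Given $\varepsilon>0$ this yields $\tau_0\le t$ with $\varphi_\tau^x(t)\in\mathcal{O}_\varepsilon(\mathcal{A}(t))$ for all $\tau\le\tau_0$, which is exactly the statement that $\mathcal{A}$ select pullback attracts $x$ at time $t$; since $x$ and $t$ are arbitrary, $\mathcal{A}$ attracts every element of $X$.

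The substantive step is quasi-invariance. Let $z\in\mathcal{A}(\tau)$, so $\varphi_{\tau_k}^{x_k}(\tau)\to z$ for some $\tau_k\to-\infty$. I would build a complete orbit through $z$ by a backward diagonal procedure over the times $\tau,\tau-1,\tau-2,\dots$. At level $m$, the restrictions $\varphi_{\tau_k}^{x_k}|_{[\tau-m,\infty)}$ belong to $\mathscr{G}(\tau-m)$ by (C2) (for $k$ large, $\tau_k\le\tau-m$), and their values at $\tau-m$ lie in $\mathcal{O}_{\varepsilon_{\tau_k}}(K(\tau-m))$, hence are precompact; extracting a convergent subsequence and invoking (C3) produces $\psi_{-m}\in\mathscr{G}(\tau-m)$ with $\varphi_{\tau_k}^{x_k}(s)\to\psi_{-m}(s)$ for every $s\ge\tau-m$. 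Passing to nested subsequences so that $\psi_{-m}$ restricts to $\psi_{-(m-1)}$ on $[\tau-m+1,\infty)$ and taking the diagonal sequence $\varphi_{\sigma_k}$, the $\psi_{-m}$ glue to a single map $\psi:\mathbb{R}\to X$ with $\psi(\tau)=z$. For $s\ge 0$, $\psi|_{[\tau+s,\infty)}$ is a restriction of $\psi_0\in\mathscr{G}(\tau)$ and lies in $\mathscr{G}(\tau+s)$ by (C2); for $s<0$, choosing $m\ge -s$, $\psi|_{[\tau+s,\infty)}$ is a restriction of $\psi_{-m}\in\mathscr{G}(\tau-m)$ and again lies in $\mathscr{G}(\tau+s)$ by (C2); thus $\psi$ is a complete orbit through $z$ at $\tau$. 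Finally, for each $t$ the value $\psi(t)=\lim_k\varphi_{\sigma_k}(t)$ is, by construction, a subsequential limit of selected solutions whose initial times tend to $-\infty$, so $\psi(t)\in\mathcal{A}(t)$ for all $t\in\mathbb{R}$, establishing quasi-invariance.

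The main obstacle is this last step: organizing the countably many applications of (C3) into one coherent diagonal sequence so that the limits at the discrete times $\tau-m$ are consistent, the glued map genuinely satisfies $\psi|_{[\tau+s,\infty)}\in\mathscr{G}(\tau+s)$ for all real $s$, and every value $\psi(t)$ is recognized as an element of $\mathcal{A}(t)$. Since (C3) supplies only subsequential pointwise convergence, the bookkeeping of the nested subsequences is where care is needed; compactness of the $K(t)$ and closedness of $\mathcal{A}(t)$ are precisely what make each extraction possible. Should one wish to strengthen the approximation (e.g.\ to uniform convergence of the approximating orbits), the LUUS condition (C4) could replace (C3) in the extraction, but for quasi-invariance alone (C2)–(C3) together with the uniform selected $K$-property suffice.
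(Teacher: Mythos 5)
Your overall strategy (pullback $\omega$-limit sets of the selected solutions, plus repeated use of (C2)--(C3) with a nested diagonal extraction for quasi-invariance) is the right one; note also that the paper under review gives no proof of this statement at all --- it is quoted from \cite{spa} --- so the only possible comparison is with that construction, which yours resembles in spirit. However, there is a genuine gap at the very first step: you define $\mathcal{A}(t)$ using limits of $\varphi_{\tau_k}^{x_k}(t)$ along sequences in which the base point $x_k$ is allowed to vary. Definition~\ref{Ku} provides, for each \emph{fixed} $x$, a threshold $\tau_0$ and rates $\varepsilon_{\tau}(t)$ that depend on $x$; only the compact family $\{K(t)\}$ is uniform in $x$. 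Hence for varying $x_k$ you have $\varphi_{\tau_k}^{x_k}(t)\in\mathcal{O}_{\varepsilon_{\tau_k}^{x_k}(t)}(K(t))$ but no reason whatsoever that $\varepsilon_{\tau_k}^{x_k}(t)\to 0$, so neither the precompactness of such sequences nor the inclusion $\mathcal{A}(t)\subset K(t)$ follows. Concretely, let $\mathscr{G}$ be generated by $u'=-u$ on $X=\R$, so that $\varphi_{\tau}^{x}(t)=xe^{-(t-\tau)}$ and the uniform selected $K$-property holds with $K(t)=\{0\}$; choosing $x_k=ce^{\,t-\tau_k}$ gives $\varphi_{\tau_k}^{x_k}(t)=c$ for every $k$, so your $\mathcal{A}(t)$ is all of $\R$ rather than $\{0\}$ --- not compact, and certainly not the attractor. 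The same non-uniformity invalidates the extraction step of your quasi-invariance argument, where you again claim precompactness of $\{\varphi_{\tau_k}^{x_k}(\tau-m)\}_k$ with varying base points.

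The repair keeps your architecture but quantifies correctly. For each fixed $x$ set $\omega_x(t):=\{y:\ \exists\,\tau_k\to-\infty,\ \varphi_{\tau_k}^{x}(t)\to y\}$, which by the argument you already gave is nonempty and contained in $K(t)$, and define $\mathcal{A}(t):=\overline{\bigcup_{x\in X}\omega_x(t)}$; this is a closed subset of the compact set $K(t)$, hence compact. Select pullback attraction is then proved exactly as you do, with $x$ held fixed throughout (which is all the definition requires). Quasi-invariance needs two cases. For $z\in\omega_x(\tau)$ your diagonal (C2)--(C3) construction goes through verbatim, since with $x$ fixed the values $\varphi_{\tau_k}^{x}(\tau-m)$ really are precompact, and the resulting complete orbit satisfies $\psi(t)\in\omega_x(t)\subset\mathcal{A}(t)$. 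For a closure point $z=\lim_m z_m$ with $z_m\in\omega_{x_m}(\tau)$, do not work with the selected solutions at all: apply (C3) and the same nested-diagonal procedure to the complete orbits $\psi_m$ through $z_m$ produced in the first case, whose restrictions lie in $\mathscr{G}(\tau-j)$ by the definition of complete orbit and whose values $\psi_m(\tau-j)$ lie in the compact sets $\mathcal{A}(\tau-j)$ --- this supplies the precompactness that varying base points cannot, and closedness of each $\mathcal{A}(t)$ guarantees the limiting orbit stays in $\mathcal{A}$.
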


\section{Continuity of Selected Pullback Attractors}\label{Continuity of Selected Pullback Attractors}

In this section we present abstract new results.

\begin{theorem}\label{asdf}
Let $\mathscr{G}$ a continuous and LUUS generalized process with the uniform selected K-property and the family $\{\mathcal{A}(t)\}_{t\in\R}$ is the global selected pullback attractor. Then the setvalued mapping $t\to \mathcal{A}(t)$ is continuous, i.e., 
$$\lim_{s\to t}dist_H(\mathcal{A}(s),\mathcal{A}(t))=0, \ \forall \ t\in \R.$$  
\end{theorem}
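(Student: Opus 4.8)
The plan is to prove the two one–sided estimates separately: upper semicontinuity, $dist(\mathcal{A}(s),\mathcal{A}(t))\to 0$, and lower semicontinuity, $dist(\mathcal{A}(t),\mathcal{A}(s))\to 0$, as $s\to t$; since $dist_H$ is the maximum of these two quantities, the theorem follows at once. Throughout I would use that each section $\mathcal{A}(t)$ is compact (hence closed), and that by quasi-invariance (which Theorem~\ref{teo25} attaches to the attractor) every $z\in\mathcal{A}(r)$ lies on a complete orbit $\psi$ with $\psi(\rho)\in\mathcal{A}(\rho)$ for all $\rho\in\R$; because the process is continuous, each such $\psi$ is a continuous map on $\R$.

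Lower semicontinuity is the easier half, and I would settle it first by contradiction using only compactness, quasi-invariance, and continuity of a single orbit. If it failed there would be $\delta>0$, $s_n\to t$ and $a_n\in\mathcal{A}(t)$ with $d(a_n,\mathcal{A}(s_n))\geq\delta$. By compactness of $\mathcal{A}(t)$ I may assume $a_n\to a\in\mathcal{A}(t)$, and I take a complete orbit $\psi$ through $a$ at $t$ inside the attractor, so that $\psi(s_n)\in\mathcal{A}(s_n)$. Then $d(a_n,\mathcal{A}(s_n))\leq d(a_n,a)+d(\psi(t),\psi(s_n))$, where the first term vanishes by construction and the second by continuity of $\psi$ at $t$, contradicting the lower bound $\delta$.

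For upper semicontinuity I argue again by contradiction: suppose there are $\delta>0$, $s_n\to t$ and $a_n\in\mathcal{A}(s_n)$ with $d(a_n,\mathcal{A}(t))\geq\delta$. Quasi-invariance furnishes complete orbits $\psi_n$ through $a_n$ at $s_n$ that lie in the attractor. Fixing $\tau_0<t$ with $\tau_0<s_n$ for all large $n$, the restrictions $\psi_n|_{[\tau_0,\infty)}$ belong to $\mathscr{G}(\tau_0)$ and start at $\psi_n(\tau_0)\in\mathcal{A}(\tau_0)$; by compactness of $\mathcal{A}(\tau_0)$ I may assume $\psi_n(\tau_0)\to w$, and the LUUS property then yields $\psi\in\mathscr{G}(\tau_0)$ and a subsequence with $\psi_n\to\psi$ uniformly on compact subsets of $[\tau_0,\infty)$. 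The crucial point, and the main obstacle, is that the evaluation times $s_n$ move, so the bare pointwise convergence (C3) does not suffice; here LUUS together with continuity of the limit $\psi$ let me write $d(\psi_n(s_n),\psi(t))\leq d(\psi_n(s_n),\psi(s_n))+d(\psi(s_n),\psi(t))$, whose first term is controlled by uniform convergence on the compact set $[\tau_0,t+1]$ and whose second term tends to $0$ by continuity of $\psi$, so that $a_n=\psi_n(s_n)\to\psi(t)$.

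Finally, since $\psi_n(t)\in\mathcal{A}(t)$ and $\psi_n(t)\to\psi(t)$ with $\mathcal{A}(t)$ closed, we get $\psi(t)\in\mathcal{A}(t)$; combined with $a_n\to\psi(t)$ this forces $d(a_n,\mathcal{A}(t))\to 0$, contradicting $d(a_n,\mathcal{A}(t))\geq\delta$. Assembling the two halves gives $\lim_{s\to t}dist_H(\mathcal{A}(s),\mathcal{A}(t))=0$ for every $t\in\R$. I expect the only genuinely delicate points to be the bookkeeping with the moving base and evaluation times and the repeated appeal to compactness of the attractor sections — precisely the roles for which the continuity and LUUS hypotheses are included.
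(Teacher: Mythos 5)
Your proposal is correct, and its second half (the bound on $dist(\mathcal{A}(s_n),\mathcal{A}(t))$) is essentially the paper's argument: complete orbits $\psi_n$ through $a_n\in\mathcal{A}(s_n)$ supplied by quasi-invariance, rebasing at a time $\tau_0<t$ (the paper uses $t-1$), compactness of $\mathcal{A}(\tau_0)$ to get convergent initial data, LUUS to extract a locally uniform limit $\psi$, and the three-term triangle inequality handling the moving evaluation times $s_n$. Where you genuinely diverge is in the other half. The paper proves $dist(\mathcal{A}(t),\mathcal{A}(s))\to 0$ by the same heavy machinery: it takes complete orbits $\varphi_n$ through each $a_n\in\mathcal{A}(t)$, applies LUUS twice (once based at $t$, once at $t-1$), and glues the two limit solutions $\varphi_t$ and $\varphi_{t-1}$ into a single limit $\varphi$ on $[t-1,\infty)$ before running the triangle inequality. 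You instead pass to the limit $a_n\to a\in\mathcal{A}(t)$ first and use a \emph{single} complete orbit $\psi$ through $a$, exploiting $\psi(s_n)\in\mathcal{A}(s_n)$ and continuity of $\psi$ (which holds on all of $\R$ since every restriction $\psi|_{[t-k,\infty)}$ lies in $\mathscr{G}(t-k)$ and the process is continuous). This is a real simplification: it shows the LUUS hypothesis is only needed for the $dist(\mathcal{A}(s),\mathcal{A}(t))$ direction, while the other direction follows from quasi-invariance, compactness of the sections, and continuity of solutions alone; it also avoids the paper's somewhat delicate gluing step. The paper's symmetric treatment buys nothing beyond uniformity of presentation, so your route is arguably the cleaner one.
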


\begin{proof}
Fixed $t\in\R$, firstly, consider the limit $dist(\mathcal{A}(t),\mathcal{A}(s))$ as $s\to t$. Suppose that $\displaystyle\lim_{s\to t}dist(\mathcal{A}(t),\mathcal{A}(s))$ is not zero, then there would exist $\delta>0$ and a sequence $s_n\to t$ such that
\begin{equation}\label{contrad}
\delta \leq dist(\mathcal{A}(t),\mathcal{A}(s_n)), \quad \forall \ n\in\N.
\end{equation}

Since $\mathcal{A}(t)$ is compact, for each $n\in \N$ there exist $a_n\in \mathcal{A}(t)$ with 
$$dist(\mathcal{A}(t),\mathcal{A}(s_n))=dist(a_n,\mathcal{A}(s_n))\leq dist(a_n,a_{s_n}))$$
for all $a_{s_n}\in\mathcal{A}(s_n)$.    

From Theorem \ref{teo25}, for each $a_n\in\mathcal{A}(t)$ there exist a complete trajectory $\varphi_n\in\mathscr{G}(t)$ such that $\varphi_n(s)\in\mathcal{A}(s)$ for all $s\in\R$ with $\varphi_n(t)=a_n$. As $\mathcal{A}(t)$ is a compact set there is $a\in\mathcal{A}(t)$ such that, up to subsequence, $a_n\to a$.    

From the LUUS property, there exist $\varphi_t\in\mathscr{G}(t)$ such that $\varphi_n\to \varphi_t$ uniformly on compact subsets of $[t, +\infty)$.

However, we have that $b_n:=\varphi_n(t-1)\in\mathcal{A}(t-1)$ for each $n\in\N$, then, analogous to the above, there exist $b\in\mathcal{A}(t-1)$ and $\varphi_{t-1}\in\mathscr{G}(t-1)$ such that, up to subsequence, $b_n\to b$ and $\varphi_{t-1}(t-1)=b$ with  $\varphi_n\to \varphi_{t-1}$ uniformly on compact subsets of $[t-1, +\infty)$. Moreover, $\varphi_{t-1}(s)=\varphi_t(s)$ for each $s\in[t,+\infty)$.

Given $s\in[t-1,\infty)$, define $\varphi(s)$ as the common value of $\varphi_{t-1}(s)$ and $\varphi_t(s)$ if $s\in[t,\infty)$ or just $\varphi_{t-1}(s)$ if $s\in[t-1,t)$. Then $\varphi_n\to\varphi$ uniformly on compact subsets of $[t-1,\infty)$. 

Note that, for $n$ large enough we have that $s_n\in[t-1,\infty)$, in this case, from the continuity of the trajectories and the uniform convergence of $\varphi_n\to \varphi$ on compact subsets of $[t-1,+\infty)$, we have that 
\begin{align*}
dist(\mathcal{A}(t),\mathcal{A}(s_n))&\leq d(\varphi_n(t),\varphi_n(s_n)) \\
&\leq d(\varphi_n(t),\varphi(t))+d(\varphi(t),\varphi(s_n))+d(\varphi(s_n),\varphi_n(s_n))\to 0.
\end{align*}
contradicting \eqref{contrad}. Therefore, $\displaystyle\lim_{s\to t}dist(\mathcal{A}(t),\mathcal{A}(s))=0.$

On the other hand, now let us show $\displaystyle\lim_{s\to t}dist(\mathcal{A}(s),\mathcal{A}(t))=0$. Suppose that it does not hold, then there would exist $\delta>0$ and a sequence $s_n\to t$ such that
\begin{equation}\label{contrad2}
\delta \leq dist(\mathcal{A}(s_n),\mathcal{A}(t)) \quad \forall \ n\in\N.
\end{equation}

Since $\mathcal{A}(s_n)$ is compact, there exist $a_n\in\mathcal{A}(s_n)$ such that 
$$dist(\mathcal{A}(s_n),\mathcal{A}(t))= dist(a_n,\mathcal{A}(t))\leq dist(a_n,a),$$
for all $a\in\mathcal{A}(t)$.

From Theorem \ref{teo25}, for each $a_n$ there exist a complete trajectory $\varphi_n$ with $\varphi_n(s_n)=a_n$ and $\varphi_n(s)\in\mathcal{A}(s)$ for all $s\in\R$. 

As $s_n\to t$ there exist $n_0\in\N$ such that $s_n\in[t-1,\infty)$ for all $n\geq n_0$. Consider a subsequence starting on $s_{n_0}$, which we do not relabel. 

  We have that 
  $$\varphi_n(t-1)\in\mathcal{A}(t-1),$$
  and, provided that $\mathcal{A}(t-1)$ is compact, there is $a_0\in\mathcal{A}(t-1)$ such that, up to subsequence, $\varphi_n(t-1)\to a_0$ as $n\to +\infty$. 
  
  From the LUUS property there exist $\varphi\in\mathscr{G}(t-1)$ with $\varphi_n\to\varphi$ uniformly on compact subsets of $[t-1,+\infty)$.

From the continuity of the trajectories and the uniform convergence of $\varphi_n\to \varphi$ on compact subsets of $[t-1,+\infty)$, we have that 
\begin{align*}
dist(\mathcal{A}(s_n),\mathcal{A}(t))&\leq d(\varphi_n(s_n),\varphi_n(t)) \\
&\leq d(\varphi_n(s_n),\varphi(s_n))+d(\varphi(s_n),\varphi(t))+d(\varphi(t),\varphi_n(t))\to 0.
\end{align*}
contradicting \eqref{contrad2}.
\end{proof}

\begin{remark}
One version of the above theorem can be found in Proposition 11 of \cite{CKM03}, where the authors have used an exact and locally uniform upper semicontinuity in $t$ for a setvalued process (or multivalued process). Note that, in our case the setvalued process generated by the generalized process $\mathscr{G}$ on the above theorem do not necessary be an exact setvalued process, and the uniformly upper semicontinuity can be obtained by LUUS property.   
\end{remark}

\begin{theorem}\label{qwer} Let $\mathcal{A}$ be a forward compact selected pullback attractor, i.e., 
$$\overline{\cup_{s\geq t}\mathcal{A}(s)}$$
is a compact set for all $t\in\R$. Then, we have that
\begin{equation}\label{dist}
lim_{t\to\infty}dist(\mathcal{A}(t),\mathcal{A}(\infty))=0,
\end{equation}
where $\mathcal{A}(\infty):=\cap_{t\in\R}\overline{\cup_{r\geq t}\mathcal{A}(r)}$. 
\end{theorem}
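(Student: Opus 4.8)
The plan is to reduce this to a purely topological fact about nested families of compact sets, using none of the process axioms (C1)--(C5) beyond the standing forward-compactness hypothesis. First I would set $B(t):=\overline{\cup_{s\geq t}\mathcal{A}(s)}$, so that $\mathcal{A}(\infty)=\cap_{t\in\R}B(t)$ and, crucially, $\mathcal{A}(t)\subseteq B(t)$ for every $t$. The two structural observations are that $\{B(t)\}_{t\in\R}$ is nested and decreasing (if $t_1\leq t_2$ then $\cup_{s\geq t_2}\mathcal{A}(s)\subseteq\cup_{s\geq t_1}\mathcal{A}(s)$, hence $B(t_2)\subseteq B(t_1)$) and that each $B(t)$ is nonempty and compact by hypothesis. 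By Cantor's intersection theorem for nested nonempty compact sets, $\mathcal{A}(\infty)$ is itself nonempty and compact, so the semidistance in \eqref{dist} is well defined.

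Since $dist$ is monotone in its first argument, $dist(\mathcal{A}(t),\mathcal{A}(\infty))\leq dist(B(t),\mathcal{A}(\infty))$, and the latter is non-increasing in $t$ because $B(t)$ shrinks; thus it suffices to rule out a positive lower bound along a sequence. I would therefore argue by contradiction: if \eqref{dist} fails there are $\delta>0$ and $t_n\to\infty$, which we may take increasing, with $dist(\mathcal{A}(t_n),\mathcal{A}(\infty))\geq\delta$. Using compactness of $\mathcal{A}(t_n)$ and continuity of $x\mapsto d(x,\mathcal{A}(\infty))$, I choose $a_n\in\mathcal{A}(t_n)$ attaining the supremum, so $d(a_n,\mathcal{A}(\infty))\geq\delta$. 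Since $a_n\in\mathcal{A}(t_n)\subseteq B(t_n)\subseteq B(t_1)$ and $B(t_1)$ is compact, a subsequence $a_{n_k}$ converges to some $a\in B(t_1)$.

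The main point, and the one place requiring care, is to show $a\in\mathcal{A}(\infty)$, i.e. $a\in B(t)$ for every real $t$ and not merely along the sequence $\{t_n\}$. Fix $t\in\R$; since $t_n\to\infty$, pick $m$ with $t_m\geq t$, so that $B(t_m)\subseteq B(t)$ by monotonicity. For all $k$ large enough one has $n_k\geq m$, whence $a_{n_k}\in B(t_{n_k})\subseteq B(t_m)\subseteq B(t)$; as $B(t)$ is closed and $a_{n_k}\to a$, we conclude $a\in B(t)$. Because $t\in\R$ was arbitrary, $a\in\cap_{t\in\R}B(t)=\mathcal{A}(\infty)$.

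Finally, $d(a_{n_k},\mathcal{A}(\infty))\leq d(a_{n_k},a)\to 0$, which contradicts $d(a_n,\mathcal{A}(\infty))\geq\delta$ and establishes \eqref{dist}. I expect the only genuine subtlety to be the passage from ``$a$ belongs to $B(t_m)$ for the sampled times'' to ``$a$ belongs to $B(t)$ for every $t$,'' which is exactly where the monotonicity of the tails and the closedness of each $B(t)$ are used; everything else is a routine compactness extraction.
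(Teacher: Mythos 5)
Your proposal is correct and follows essentially the same route as the paper: argue by contradiction, pick $x_n\in\mathcal{A}(t_n)$ violating the attraction by $\delta$, use forward compactness of the tail $\overline{\cup_{s\geq t_0}\mathcal{A}(s)}$ to extract a convergent subsequence, and show the limit lies in $\mathcal{A}(\infty)$, yielding the contradiction. The only difference is presentational: you spell out (via monotonicity of the nested tails $B(t)$ and their closedness, plus Cantor's intersection theorem) the step the paper compresses into ``from the definition we have $x\in\mathcal{A}(\infty)$,'' which is a welcome clarification but not a different argument.
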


\begin{proof}
Note that, since $\mathcal{A}$ is forward compact the set $\mathcal{A}(\infty)$ is a compact set. 

Suppose that \eqref{dist} is not true, then there are $\delta>0$ and a real sequence $\{t_n\}_{n\in\N}$ with $0<t_n\nearrow+\infty$ such that $dist(\mathcal{A}(t_n),\mathcal{A}(\infty))\geq \delta$ for all $n\in\N$. 

Thus, for each $n\in \N$ there is $x_n\in\mathcal{A}(t_n)$ such that 
\begin{equation}\label{ABS}
dist(x_n,\mathcal{A}(\infty))\geq\delta.
\end{equation}
Note that, for some $t_0\geq 0$, we have $\{x_n\}_{n\in\N}\subset\overline{\cup_{s\geq t_0}\mathcal{A}(s)}$ for $t_n\geq t_0$.

As $\mathcal{A}$ is a forward compact family we have that the sequence $\{x_n\}_{n\in\N}$ has got a convergent subsequence, which we do not relabel, and let $x\in X$ a limit of this subsequence. 

From the definition we have $x\in \mathcal{A}(\infty)$, and then \eqref{ABS} is a contradiction. 
\end{proof}

\begin{theorem} Let $\mathcal{A}$ be a backward compact selected pullback attractor, i.e.,
$$\overline{\cup_{s\leq t}\mathcal{A}(s)}$$
is a compact set for all $t\in\R$. then, we have 
\begin{equation}\label{dist2}
\lim_{t\to -\infty}dist(\mathcal{A}(t),\mathcal{A}(-\infty))=0,
\end{equation}
where $\mathcal{A}(-\infty):=\cap_{t\in\R}\overline{\cup_{r\leq t}\mathcal{A}(r)}.$
\end{theorem}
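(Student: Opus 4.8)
The plan is to mirror the proof of Theorem~\ref{qwer} almost verbatim, interchanging the roles of $+\infty$ and $-\infty$: every tail union $\cup_{s\geq t}\mathcal{A}(s)$ becomes the backward union $\cup_{s\leq t}\mathcal{A}(s)$, and a monotone sequence $t_n\nearrow+\infty$ is replaced by $t_n\searrow-\infty$. First I would record the preliminary observation that backward compactness of $\mathcal{A}$ forces $\mathcal{A}(-\infty)$ to be compact, since it is an intersection of the closed (indeed, by hypothesis compact) sets $\overline{\cup_{r\leq t}\mathcal{A}(r)}$.

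Next I would argue by contradiction in exactly the same shape as before. Assuming \eqref{dist2} fails, there are $\delta>0$ and a sequence $t_n\to-\infty$ with $dist(\mathcal{A}(t_n),\mathcal{A}(-\infty))\geq\delta$ for all $n$, whence points $x_n\in\mathcal{A}(t_n)$ satisfying $dist(x_n,\mathcal{A}(-\infty))\geq\delta$. Fixing any $t_0\in\R$, for all $n$ large enough one has $t_n\leq t_0$, so $x_n\in\overline{\cup_{s\leq t_0}\mathcal{A}(s)}$; backward compactness of this set then yields a subsequence, which I would not relabel, converging to some $x\in X$.

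The one point requiring a little care is the verification that $x\in\mathcal{A}(-\infty)$, which is where the definition of $\mathcal{A}(-\infty)$ is actually used. Fix an arbitrary $t\in\R$; since $t_n\to-\infty$, there is $N_t$ such that the tail $\{x_n:n\geq N_t\}$ lies in $\overline{\cup_{r\leq t}\mathcal{A}(r)}$, and as this set is closed its limit $x$ belongs to it. Since $t\in\R$ was arbitrary, $x\in\cap_{t\in\R}\overline{\cup_{r\leq t}\mathcal{A}(r)}=\mathcal{A}(-\infty)$, so that $dist(x,\mathcal{A}(-\infty))=0$, contradicting $dist(x_n,\mathcal{A}(-\infty))\geq\delta$ on passing to the limit. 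This membership step is the only genuine obstacle, and it is mild; the remainder is a direct transcription of the forward-compact case, so I would not spell it out in detail.
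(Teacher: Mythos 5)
Your proposal is correct and follows essentially the same contradiction argument as the paper: extract $x_n\in\mathcal{A}(t_n)$ at distance at least $\delta$ from $\mathcal{A}(-\infty)$, use backward compactness to obtain a convergent subsequence, and show the limit lies in $\mathcal{A}(-\infty)$. The only difference is that you spell out the membership step $x\in\mathcal{A}(-\infty)$ (which the paper dispatches with ``from the definition''), a harmless and slightly more careful rendering of the same proof.
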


\begin{proof}
Note that, since $\mathcal{A}$ is backward compact the set $\mathcal{A}(-\infty)$ is a compact set. 

Suppose that \eqref{dist2} is not true, then there are $\delta>0$ and a real sequence $\{t_n\}_{n\in\N}$ with $0>t_n\searrow-\infty$ such that $dist(\mathcal{A}(t_n),\mathcal{A}(-\infty))\geq \delta$ for all $n\in\N$. 

Thus, for each $n\in \N$ there is $x_n\in\mathcal{A}(t_n)$ such that 
\begin{equation}\label{ABS2}
dist(x_n,\mathcal{A}(-\infty))\geq\delta.
\end{equation}
Note that, for some $t_0\leq 0$, we have $\{x_n\}_{n\in\N}\subset\overline{\cup_{s\leq t_0}\mathcal{A}(s)}$ for $t_n\leq t_0$.

As $\mathcal{A}$ is a backward compact family we have that the sequence $\{x_n\}_{n\in\N}$ has a convergent subsequence, which we do not relabel, and let $x\in X$ a limit of this subsequence. 

From the definition we have $x\in \mathcal{A}(-\infty)$, and then \eqref{ABS2} is a contradiction.   
\end{proof}

\begin{definition} Let $\mathcal{A}=\{\mathcal{A}(t)\}$ be a quasi-invariant global selected pullback attractor of a generalized process $\mathscr{G}$. We say that the generalized process $\mathscr{G}$ is \textbf{asymptotically autonomous for $\mathcal{A}$} if for each sequence $\{x_{\tau}\}\subset X$ such that $x_{\tau}\in\mathcal{A}(\tau)$ and $x_{\tau}\to x_0$ as $\tau\to\infty$, let $\varphi_{\tau}$ a complete trajectory with $\varphi_{\tau}(\tau)=x_{\tau}$ and $\varphi_{\tau}(s)\in\mathcal{A}(s)$ for all $s\in\R$, we have that there exist a solution $\varphi$ of an autonomous problem such that 
$$\varphi_{\tau}(t+\tau)\to\varphi(t)\;\textrm{as}\; \tau\to\infty\; \textrm{and}\; \varphi(0)=x_0.$$
\end{definition}

\begin{theorem}\label{TeoX}
Let $\mathscr{G}$ be an asymptotically autonomous for a selected pullback attractor $\mathcal{A}$ and let $\mathcal{A}_{\infty}$ be the global attractor of the corresponding autonomous problem. If the selected pullback attractor $\mathcal{A}$ is forward compact, then 
\begin{equation}\label{conv1}
\displaystyle \lim_{t\to\infty}dist(\mathcal{A}(t),\mathcal{A}_{\infty})= 0.
\end{equation} 
\end{theorem}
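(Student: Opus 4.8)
The plan is to argue by contradiction, reusing the extraction scheme of the previous two theorems but now feeding the resulting sequence into the asymptotic autonomy hypothesis. Suppose \eqref{conv1} fails. Then there are $\delta>0$ and a sequence $t_n\nearrow+\infty$ with $dist(\mathcal{A}(t_n),\mathcal{A}_{\infty})\geq\delta$ for every $n$. Since each $\mathcal{A}(t_n)$ is compact and the map $y\mapsto dist(y,\mathcal{A}_{\infty})$ is continuous (indeed $1$-Lipschitz), I would pick $x_n\in\mathcal{A}(t_n)$ attaining the supremum defining $dist(\mathcal{A}(t_n),\mathcal{A}_{\infty})$, so that $dist(x_n,\mathcal{A}_{\infty})\geq\delta$. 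For $n$ large one has $t_n\geq t_0$ for a fixed $t_0$, hence $\{x_n\}\subset\overline{\cup_{s\geq t_0}\mathcal{A}(s)}$, which is compact by forward compactness; passing to a subsequence (not relabeled) gives $x_n\to x_0$, and letting $n\to\infty$ in $dist(x_n,\mathcal{A}_{\infty})\geq\delta$ yields $dist(x_0,\mathcal{A}_{\infty})\geq\delta$.

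Next I would produce the limiting autonomous solution. Since $\mathcal{A}$ is a quasi-invariant selected pullback attractor (Theorem \ref{teo25}), for each $n$ there is a complete trajectory $\varphi_n$ with $\varphi_n(t_n)=x_n$ and $\varphi_n(s)\in\mathcal{A}(s)$ for all $s\in\R$. The data $x_n\in\mathcal{A}(t_n)$ with $t_n\to\infty$ and $x_n\to x_0$, together with these complete trajectories $\varphi_n$, are precisely the input required by the definition of asymptotic autonomy, so there exists a solution $\varphi$ of the corresponding autonomous problem with $\varphi(0)=x_0$ and $\varphi_n(t+t_n)\to\varphi(t)$ as $n\to\infty$ for every $t\in\R$.

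The heart of the argument is to show that $\varphi$ is a complete trajectory of the autonomous problem whose orbit is bounded and, in fact, lands in $\mathcal{A}_{\infty}$. Fix $t\in\R$. For $n$ large we have $t+t_n\geq t_0$, whence $\varphi_n(t+t_n)\in\mathcal{A}(t+t_n)\subset\overline{\cup_{s\geq t_0}\mathcal{A}(s)}$; letting $n\to\infty$ and noting $t+t_n\to+\infty$ gives $\varphi(t)\in\mathcal{A}(\infty)=\cap_{r}\overline{\cup_{s\geq r}\mathcal{A}(s)}$, which is compact. Thus the image of $\varphi$ lies in the bounded set $\mathcal{A}(\infty)$. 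Since $\mathcal{A}_{\infty}$ is the global attractor of the autonomous problem it attracts bounded sets, and because $\varphi(0)$ is the value at time $r$ of the autonomous solution $s\mapsto\varphi(s-r)$ issuing from $\varphi(-r)\in\mathcal{A}(\infty)$, the attraction of the bounded set $\mathcal{A}(\infty)$ forces $dist(\varphi(0),\mathcal{A}_{\infty})\to 0$ as $r\to\infty$. Hence $x_0=\varphi(0)\in\mathcal{A}_{\infty}$, a closed set, contradicting $dist(x_0,\mathcal{A}_{\infty})\geq\delta>0$, and \eqref{conv1} follows.

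I expect the main obstacle to be this middle step: verifying that the object $\varphi$ delivered by asymptotic autonomy is genuinely a complete trajectory of the autonomous dynamics with relatively compact range, so that the attraction property of $\mathcal{A}_{\infty}$ can legitimately be applied to the single bounded orbit $\varphi(\R)$. Care is also needed to read the asymptotic autonomy definition along the discrete sequence $t_n$ rather than a continuous parameter, and to ensure that the complete trajectories $\varphi_n$ are exactly those the definition quantifies over.
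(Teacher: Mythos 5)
Your opening extraction ($x_n\in\mathcal{A}(t_n)$ with $dist(x_n,\mathcal{A}_{\infty})\geq\delta$, then $x_n\to x_0$ by forward compactness) and the appeal to quasi-invariance via Theorem \ref{teo25} match the paper. But the middle step you yourself flagged as the main obstacle is a genuine gap, and it is precisely where the paper's proof takes a different route. You apply asymptotic autonomy at the points $x_n$ themselves and then need the limiting object $\varphi$ to be an \emph{entire} solution of the autonomous problem, defined for all $t\in\R$, with $\varphi(t)\in\mathcal{A}(\infty)$ also for $t<0$: only then can you write $\varphi(0)\in G(r,\varphi(-r))\subset G(r,\mathcal{A}(\infty))$ and let $r\to\infty$. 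The definition of asymptotic autonomy does not supply this. It produces ``a solution $\varphi$ of an autonomous problem'' with $\varphi(0)=x_0$, i.e.\ a solution issuing forward from $x_0$, and the convergence $\varphi_{\tau}(t+\tau)\to\varphi(t)$ is only available for $t\geq 0$; this forward-only reading is the operative one, since it is all the paper's own proof uses and all that is actually verified in the application (Lemma \ref{convU} establishes convergence ``for each $t\geq 0$''). Knowing only the forward orbit of $x_0$ cannot produce your contradiction: attraction by $\mathcal{A}_{\infty}$ acts forward in time, so it bounds $dist(\varphi(t),\mathcal{A}_{\infty})$ for large positive $t$ but says nothing about $dist(x_0,\mathcal{A}_{\infty})$ itself. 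Without the backward half of the orbit, the argument stops there.

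The paper closes this hole with a pull-back-then-push-forward maneuver that only ever evaluates the asymptotic autonomy limit at a single nonnegative time. With $C=\overline{\cup_{s\geq 0}\mathcal{A}(s)}$ compact, first fix a time $T>0$ so large that $dist(G(T,C),\mathcal{A}_{\infty})\leq\varepsilon_0$ (attraction of the bounded set $C$ by the autonomous attractor); then, along the complete trajectories $\varphi_n$ through $x_n$ given by quasi-invariance, set $b_n:=\varphi_n(t_n-T)\in\mathcal{A}(t_n-T)\subset C$ and extract $b_n\to b$. Now apply asymptotic autonomy at the pulled-back points $b_n$, whose times $t_n-T$ still tend to $+\infty$, and use the convergence only at $t=T\geq 0$: this yields $x_n=\varphi_n(T+(t_n-T))\to\varphi(T)\in G(T,C)$, hence $dist(x_n,\mathcal{A}_{\infty})\leq\|x_n-\varphi(T)\|+dist(\varphi(T),\mathcal{A}_{\infty})\leq 2\varepsilon_0$ for $n$ large, which contradicts the assumed lower bound once $\varepsilon_0$ is taken smaller than $\delta/2$ (the paper phrases this with $3\varepsilon_0$ in place of $\delta$). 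Your side worry about invoking the definition along a discrete sequence is harmless — the paper itself does exactly that with the sequence $t_n-T$. If you restructure your argument to apply asymptotic autonomy one fixed time-step back rather than at $x_n$, your proof closes using only what the hypothesis genuinely provides.
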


\begin{proof} Let $\mathcal{A}$ forward compact and suppose \eqref{conv1} is not true. Then there would exist an $\varepsilon_0>0$ and a real sequence $\{\tau_n\}$ with $0<\tau_n\nearrow +\infty$ such that $dist(\mathcal{A}(\tau_n),\mathcal{A}_{\infty})\geq 3\varepsilon_0$ for all $n\in\N$. Since the sets $\mathcal{A}(\tau_n)$ are compact, there exist $a_n\in\mathcal{A}(\tau_n)$ such that
\begin{equation}\label{XXX}
dist(a_n,\mathcal{A}_{\infty})=dist(\mathcal{A}(\tau_n),\mathcal{A}_{\infty})\geq 3\varepsilon_0,
\end{equation}
for each $n\in\N$. 

Let $C=\overline{\cup_{s\geq 0}\mathcal{A}(s)}$ a compact set, there is $n_0>0$ such that 
$$dist(G(\tau_{n_0},C),\mathcal{A}_{\infty})\leq \varepsilon_0.$$
with $G(t,\cdot)$ is the multivalued semigroup of the corresponding autonomous problem.  

Note that the sequence $0<\tau_n-\tau_{n_0}\nearrow \infty$ for all $n\geq n_0$, and there are a sequence of complete orbits $\varphi_{\tau_n-\tau_{n_0}}$ and a sequence $\{b_n\}_{n\geq n_0}$ with $\varphi_{\tau_n-\tau_{n_0}}(\tau_n-\tau_{n_0})=b_n\in\mathcal{A}(\tau_n-\tau_{n_0})$ and 
$$a_n= \varphi_{\tau_n-\tau_{n_0}}(\tau_n).$$

We have that $\{b_n\}\subset C$, then there is $b$ such that, up to a subsequence, $b_n\to b$

We have that,
\begin{align*}
a_n=\varphi_{\tau_n-\tau_{n_0}}(\tau_{n_0}+\tau_n-\tau_{n_0})\to  \varphi(\tau_{n_0})   
\end{align*}
with $\varphi(0)=b$, and then $\varphi(\tau_{n_0})\in G(\tau_{n_0},C).$

Therefore, for $n$ large enough, we have  
\begin{align*}
dist(a_n,\mathcal{A}_{\infty})\leq \|a_n-\varphi(\tau_{n_0})\|+dist(\varphi(\tau_{n_0}),\mathcal{A}_{\infty})\leq 2\varepsilon_0
\end{align*} 
which contradicts \eqref{XXX}.
\end{proof}

\begin{theorem}
Let $\mathscr{G}$ be a LUUS process composed by continuous functions and asymptotically autonomous for $\mathcal{A}$ with $\mathcal{A}_{\infty}$ be the global attractor of the corresponding autonomous problem. If  
\begin{equation}\label{conv2}
\displaystyle \lim_{t\to\infty}dist(\mathcal{A}(t),\mathcal{A}_{\infty})\to 0,
\end{equation} 
then  the selected pullback attractor $\mathcal{A}$ is forward compact.
\end{theorem}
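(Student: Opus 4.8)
The plan is to prove directly that $\overline{\bigcup_{s\ge t}\mathcal{A}(s)}$ is compact for each fixed $t\in\R$ by showing that the union $\bigcup_{s\ge t}\mathcal{A}(s)$ is precompact; since $(X,d)$ is complete, it then suffices to check that every sequence drawn from this union admits a convergent subsequence. So I would fix $t$ and take $x_n\in\mathcal{A}(s_n)$ with $s_n\ge t$. After passing to a subsequence, the times $s_n$ either converge to a finite limit $s^*\ge t$ or diverge to $+\infty$, and I would treat these two regimes separately. Throughout I use that $\mathcal{A}$ is a quasi-invariant selected pullback attractor (which is part of the standing hypothesis, since $\mathscr{G}$ is asymptotically autonomous for $\mathcal{A}$) and that each $\mathcal{A}(t)$ and the global attractor $\mathcal{A}_\infty$ are compact.

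In the divergent regime $s_n\to+\infty$, I would invoke hypothesis \eqref{conv2}: since $dist(\mathcal{A}(s_n),\mathcal{A}_\infty)\to 0$, for each $n$ there is $y_n\in\mathcal{A}_\infty$ with $d(x_n,y_n)\to 0$. Compactness of $\mathcal{A}_\infty$ gives a subsequence with $y_n\to y\in\mathcal{A}_\infty$, and along that subsequence $x_n\to y$, producing the desired convergent subsequence.

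In the convergent regime $s_n\to s^*<\infty$, I would use quasi-invariance. For each $n$ there is a complete orbit $\psi_n$ through $x_n$ at $s_n$ with $\psi_n(r)\in\mathcal{A}(r)$ for all $r$; since $s_n\ge t$, the restriction $\psi_n|_{[t,\infty)}$ lies in $\mathscr{G}(t)$ and $\psi_n(t)\in\mathcal{A}(t)$. As $\mathcal{A}(t)$ is compact, a subsequence gives $\psi_n(t)\to z\in\mathcal{A}(t)$, and the LUUS property then yields $\varphi\in\mathscr{G}(t)$ with $\varphi(t)=z$ such that, along a further subsequence, $\psi_n\to\varphi$ uniformly on compact subsets of $[t,\infty)$. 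I would then estimate
\[
d(x_n,\varphi(s^*))=d(\psi_n(s_n),\varphi(s^*))\le d(\psi_n(s_n),\varphi(s_n))+d(\varphi(s_n),\varphi(s^*)),
\]
where the first term tends to $0$ because the $s_n$ eventually lie in a fixed compact interval on which $\psi_n\to\varphi$ uniformly, and the second tends to $0$ because $\varphi$ is continuous (the process is composed by continuous functions) and $s_n\to s^*$. Hence $x_n\to\varphi(s^*)$ along the subsequence.

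Combining the two regimes, every sequence in $\bigcup_{s\ge t}\mathcal{A}(s)$ has a convergent subsequence, so the union is precompact and its closure is compact; that is, $\mathcal{A}$ is forward compact. The main obstacle I anticipate is precisely the finite-time regime: hypothesis \eqref{conv2} gives no information there, so compactness must be extracted from the structure of the process itself. The delicate point is to upgrade the pointwise limit $\psi_n(t)\to z$ to \emph{uniform} convergence of the orbits through LUUS, and then to pass through the moving evaluation times $s_n\to s^*$ via the triangle-inequality split above, where continuity of the limiting solution is exactly what controls the term $d(\varphi(s_n),\varphi(s^*))$.
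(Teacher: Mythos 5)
Your proposal is correct and takes essentially the same approach as the paper: the same dichotomy on the evaluation times (convergent to a finite limit versus divergent to $+\infty$), with the finite-time case handled by quasi-invariance, compactness of $\mathcal{A}(t)$, the LUUS property, and the identical triangle-inequality estimate, and the divergent case handled by the convergence hypothesis together with compactness of $\mathcal{A}_{\infty}$. The only difference is cosmetic: you are more explicit than the paper in invoking quasi-invariance to produce the orbits $\psi_n$ and in restricting them to $[t,\infty)$ to place them in $\mathscr{G}(t)$, steps the paper leaves implicit.
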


\begin{proof}
Suppose that \eqref{conv2} is true. For fixed $t\in\R$ let $\{x_n\}\subset \cup_{t\leq r}\mathcal{A}(r)$. For each $n\in\N$ there is $r_n$ such that $x_n\in A(r_n)$. Thus we have two cases:

\

\noindent\textbf{Case 1:} $r_0:=sup_nr_n<\infty$.

In this case there is a sequence $\{\varphi_n\}$ with $\varphi_n(r_n)=x_n$ and $\varphi_n(t)\in\mathcal{A}(t)$, as the set $\mathcal{A}(t)$ is compact there is $b\in X$ such that, up to a subsequence, $\varphi_n(t)\to b$. From the LUUS property of $\mathscr{G}$ there exist $\varphi\in\mathscr{G}(t)$ such that $\varphi_n\to\varphi$ on compact subsets of $[t,\infty)$.

As $\{r_n\}\subset [t,r_0]$ there is $r'\in [t,r_0]$ such that, up to a subsequence, $r_n\to r'$. Given $\varepsilon>0$ and from the continuity of the solutions and uniform convergence of $\varphi_n$ in $[t,r_0]$, for $n$ large enough, we get 
\begin{align*}
\|\varphi_n(r_n)-\varphi(r')\|\leq \|\varphi_n(r_n)-\varphi(r_n)\|+\|\varphi(r_n)-\varphi(r')\|<\varepsilon.
\end{align*}

\

\noindent\textbf{Case 2:} $\sup_nr_n=\infty$.

In this case, up to a subsequence, we may assume $r_n\nearrow \infty$. We have
\begin{align*}
dist(x_n,\mathcal{A}_{\infty})\leq dist(\mathcal{A}(r_n),\mathcal{A}_{\infty})\to 0.
\end{align*}
We can choose $y_n\in\mathcal{A}_{\infty}$ such that 
$$d(x_n,y_n)\leq dist(x_n,\mathcal{A}_{\infty})+\frac{1}{n}.$$
There is $y\in\mathcal{A}_{\infty}$ such that, up to a subsequence, $y_n\to y$, which implies that $x_n\to y$.
\end{proof}

\begin{definition} Let $\mathcal{A}=\{\mathcal{A}(t)\}$ be a quasi-invariant global selected pullback attractor of a generalized process $\mathscr{G}$. We say that the generalized process $\mathscr{G}$ is \textbf{asymptotically backward autonomous} for $\mathcal{A}$ if for each sequence $\{x_{\tau}\}\subset X$ such that $x_{\tau}\in\mathcal{A}(\tau)$ and $x_{\tau}\to x_0$ when $\tau\to-\infty$, let $\varphi_{\tau}$ a complete trajectory with $\varphi_{\tau}(\tau)=x_{\tau}$ and $\varphi_{\tau}(s)\in\mathcal{A}(s)$ for all $s\in\R$, we have that there exist a solution $\varphi$ of an autonomous problem such that 
$$\varphi_{\tau}(t+\tau)\to\varphi(t)$$
with $\tau\to-\infty$ and $\varphi(0)=x_0$. 
\end{definition}

\begin{theorem}
Let $\mathscr{G}$ be an asymptotically backward autonomous process for a selected pullback attractor $\mathcal{A}$ and let $\mathcal{A}_{\infty}$ be the global attractor of the corresponding autonomous problem. If the selected pullback attractor is backwards compact, then
\begin{equation}\label{conv11}
\lim_{t\to -\infty}dist(\mathcal{A}(t),\mathcal{A}_{\infty})=0.
\end{equation}
\end{theorem}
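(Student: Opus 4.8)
The plan is to mirror the proof of Theorem \ref{TeoX} almost verbatim, exchanging the roles of $+\infty$ and $-\infty$ and using the backward compactness hypothesis in place of forward compactness. First I would argue by contradiction: assuming \eqref{conv11} fails, there exist $\varepsilon_0>0$ and a sequence $\tau_n$ with $0>\tau_n\searrow-\infty$ such that $dist(\mathcal{A}(\tau_n),\mathcal{A}_\infty)\geq 3\varepsilon_0$. Since each $\mathcal{A}(\tau_n)$ is compact, I extract points $a_n\in\mathcal{A}(\tau_n)$ realizing this distance, so that
\begin{equation*}
dist(a_n,\mathcal{A}_\infty)=dist(\mathcal{A}(\tau_n),\mathcal{A}_\infty)\geq 3\varepsilon_0.
\end{equation*}

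Next I would introduce the backward compact hull $C=\overline{\cup_{s\leq 0}\mathcal{A}(s)}$, which is compact by hypothesis, and use the attraction property of the autonomous global attractor $\mathcal{A}_\infty$ to fix an index $n_0$ with $dist(G(|\tau_{n_0}|,C),\mathcal{A}_\infty)\leq\varepsilon_0$, where $G(t,\cdot)$ is the autonomous multivalued semigroup. For each $n\geq n_0$ I use quasi-invariance (Theorem \ref{teo25}) to obtain complete trajectories $\varphi_n$ with $\varphi_n(\tau_n)=a_n$ and $\varphi_n(s)\in\mathcal{A}(s)$ for all $s$; setting $b_n:=\varphi_n(\tau_n-\tau_{n_0})\in C$ (note $\tau_n-\tau_{n_0}\leq 0$ since $\tau_n\searrow-\infty$), backward compactness of $C$ yields, up to a subsequence, $b_n\to b$ with $b\in C$.

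The key step is then to invoke the asymptotically backward autonomous property. Writing $\psi_n(t):=\varphi_n(t+(\tau_n-\tau_{n_0}))$ so that $\psi_n(0)=b_n\to b$, the definition of asymptotically backward autonomous (applied along $\tau_n-\tau_{n_0}\to-\infty$) provides an autonomous solution $\varphi$ with $\varphi(0)=b$ and $\psi_n(t)\to\varphi(t)$ as $n\to\infty$. Evaluating at $t=\tau_{n_0}$ gives
\begin{equation*}
a_n=\varphi_n(\tau_n)=\psi_n(\tau_{n_0})\to\varphi(\tau_{n_0}),
\end{equation*}
and since $\varphi(0)=b\in C$ we have $\varphi(\tau_{n_0})\in G(|\tau_{n_0}|,C)$. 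Hence for $n$ large,
\begin{equation*}
dist(a_n,\mathcal{A}_\infty)\leq \|a_n-\varphi(\tau_{n_0})\|+dist(\varphi(\tau_{n_0}),\mathcal{A}_\infty)\leq 2\varepsilon_0,
\end{equation*}
contradicting the bound $dist(a_n,\mathcal{A}_\infty)\geq 3\varepsilon_0$, which proves \eqref{conv11}.

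The main obstacle I anticipate is bookkeeping with the time shifts and the direction of monotonicity: because $\tau_n\to-\infty$, the shift $\tau_n-\tau_{n_0}$ is eventually negative and decreasing, so I must check that $b_n=\varphi_n(\tau_n-\tau_{n_0})$ genuinely lands in the backward hull $C$ and that the asymptotically backward autonomous hypothesis applies with the shift parameter tending to $-\infty$ rather than $+\infty$. One also must confirm that $G(|\tau_{n_0}|,C)$ is the correct forward-in-time image of $C$ under the autonomous semigroup (the elapsed time from $\tau_n-\tau_{n_0}$ to $\tau_n$ equals $\tau_{n_0}$, whose magnitude $|\tau_{n_0}|$ is the semigroup time); modulo these sign conventions, the argument is a direct transcription of Theorem \ref{TeoX}.
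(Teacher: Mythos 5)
Your overall strategy is the paper's: argue by contradiction, pick $a_n\in\mathcal{A}(\tau_n)$ with $dist(a_n,\mathcal{A}_\infty)\geq 3\varepsilon_0$, run complete trajectories through $a_n$, anchor them at a reference point $b_n$ in the compact backward hull $C=\overline{\cup_{s\leq 0}\mathcal{A}(s)}$, and combine the asymptotically backward autonomous property with the attraction of $\mathcal{A}_\infty$ under the autonomous semigroup $G$. But your time bookkeeping is inverted, precisely at the step you flagged as delicate, and as written the proof fails there. Since $\tau_{n_0}<0$, the time $\tau_n-\tau_{n_0}=\tau_n+|\tau_{n_0}|$ lies \emph{after} $\tau_n$; thus your $b_n:=\varphi_n(\tau_n-\tau_{n_0})$ is the \emph{forward} image of $a_n$ along the trajectory, and recovering $a_n$ from $b_n$ means running time backwards: $a_n=\psi_n(\tau_{n_0})$ with $\tau_{n_0}<0$. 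This is fatal twice over. First, the asymptotically backward autonomous property furnishes a solution $\varphi$ of the autonomous problem with $\varphi(0)=b$, defined forward in time, with convergence $\psi_n(t)\to\varphi(t)$ guaranteed only for $t\geq 0$; nothing gives you convergence (or even a well-defined $\varphi$) at $t=\tau_{n_0}<0$. Second, even if $\varphi$ extended to negative times, the inclusion $\varphi(\tau_{n_0})\in G(|\tau_{n_0}|,C)$ reverses the semigroup: from $\varphi(0)=b\in C$ one only gets $\varphi(t)\in G(t,C)$ for $t\geq 0$, whereas at negative times $\varphi(\tau_{n_0})$ is a \emph{preimage} of a point of $C$, i.e. $b\in G(|\tau_{n_0}|,\{\varphi(\tau_{n_0})\})$, and preimages of $C$ need not be anywhere near $\mathcal{A}_\infty$ (for $\dot{x}=-x$ the time-$T$ preimage of the unit ball is the ball of radius $e^{T}$).

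The repair is a sign change, and it is what the paper does. Choose a fixed $T_0>0$ with $dist(G(T_0,C),\mathcal{A}_\infty)\leq\varepsilon_0$ (attraction of the autonomous global attractor; there is no need to tie this time to the sequence), and set $b_n:=\varphi_n(\tau_n-T_0)\in\mathcal{A}(\tau_n-T_0)\subset C$, so that $b_n$ lies in the \emph{past} of $a_n$. Then the shifts $\tau_n-T_0\to-\infty$, a subsequence gives $b_n\to b\in C$, and the asymptotically backward autonomous property applied to these data yields an autonomous solution $\varphi$ with $\varphi(0)=b$ and
\begin{equation*}
a_n=\varphi_n\bigl(T_0+(\tau_n-T_0)\bigr)\to\varphi(T_0)\in G(T_0,C),
\end{equation*}
the evaluation now being at the positive time $T_0$. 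Hence $dist(a_n,\mathcal{A}_\infty)\leq\|a_n-\varphi(T_0)\|+dist(\varphi(T_0),\mathcal{A}_\infty)\leq 2\varepsilon_0$ for large $n$, contradicting $dist(a_n,\mathcal{A}_\infty)\geq 3\varepsilon_0$. (Equivalently, keep your $n_0$ but define $b_n:=\varphi_n(\tau_n+\tau_{n_0})$, with a plus sign, and evaluate at $t=|\tau_{n_0}|$.) With this correction your argument coincides with the paper's proof.
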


\begin{proof}

Let $\mathcal{A}$ backward compact and suppose \eqref{conv11} is not true. Then, there would exist an $\varepsilon_0>0$ and a real sequence $\{\tau_n\}$ with $0<\tau_n\nearrow +\infty$ such that $dist(\mathcal{A}(-\tau_n),\mathcal{A}_{\infty})\geq 3\varepsilon_0$ for all $n\in\N$. Since the sets $\mathcal{A}(-\tau_n)$ are compact, there exists $a_n\in\mathcal{A}(-\tau_n)$ such that 
\begin{equation}\label{XX}
dist(a_n,\mathcal{A}_{\infty})=dist(\mathcal{A}(-\tau_n),\mathcal{A}_{\infty})\geq 3\varepsilon_0,
\end{equation}
for each $n\in\N$. 

Let $C=\overline{\cup_{s\leq 0}\mathcal{A}(s)}$ a compact set, there is $T_0>0$ such that 
$$dist(G(T_0,C),\mathcal{A}_{\infty})\leq \varepsilon_0.$$
with $G(t,\cdot)$ is the multivalued semigroup of the corresponding autonomous problem.  

Note that the sequence $0>-\tau_n-T_0\searrow -\infty$, and there are a sequence of complete orbits $\{\varphi_{-\tau_n-T_0}\}_{n\in\N}$ and a sequence $\{b_n\}_{n\in\N}$ with $\varphi_{-\tau_n-T_0}(-\tau_n-T_0)=b_n\in\mathcal{A}(-\tau_n-T_0)$ and 
$$a_n= \varphi_{-\tau_n-T_0}(-\tau_n).$$

We have that $\{b_n\}\subset C$, then there is $b\in C$ such that, up to a subsequence, $b_n\to b.$

We have then,
\begin{align*}
a_n=\varphi_{-\tau_n-T_0}(T_0-\tau_n-T_0)\to  \varphi(T_0)   
\end{align*}
with $\varphi(0)=b$, and then $\varphi(T_0)\in G(T_0,C).$

Therefore, for $n$ large enough, we have  
\begin{align*}
dist(a_n,\mathcal{A}_{\infty})\leq \|a_n-\varphi(T_0)\|+dist(\varphi(T_0),\mathcal{A}_{\infty})\leq 2\varepsilon_0
\end{align*} 
which contradicts \eqref{XX}.
\end{proof}

\begin{theorem}
Let $\mathscr{G}$ be a LUUS process composed by continuous functions and asympto\-ti\-cally backwards autonomous for $\mathcal{A}$ with $\mathcal{A}_{\infty}$ be the global attractor of the corresponding autonomous problem. If  
\begin{equation}\label{conv22}
\displaystyle \lim_{t\to-\infty}dist(\mathcal{A}(t),\mathcal{A}_{\infty})\to 0,
\end{equation} 
then  the selected pullback attractor $\mathcal{A}$ is backwards compact.
\end{theorem}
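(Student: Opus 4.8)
The plan is to mirror the proof of the forward-compact companion theorem, interchanging the roles of $+\infty$ and $-\infty$. Fix $t\in\R$ and let $\{x_n\}$ be an arbitrary sequence in $\cup_{r\leq t}\mathcal{A}(r)$; since $X$ is a metric space, it suffices to produce a convergent subsequence, which yields relative compactness of $\cup_{r\leq t}\mathcal{A}(r)$ and hence compactness of its closure $\overline{\cup_{r\leq t}\mathcal{A}(r)}$. For each $n$ pick $r_n\leq t$ with $x_n\in\mathcal{A}(r_n)$, and split the argument according to whether the times $r_n$ stay bounded below.

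First I would treat the case $r_0:=\inf_n r_n>-\infty$, so that $\{r_n\}\subset[r_0,t]$. Using quasi-invariance of the attractor (Theorem \ref{teo25}), for each $n$ I select a complete orbit $\varphi_n$ with $\varphi_n(r_n)=x_n$ and $\varphi_n(s)\in\mathcal{A}(s)$ for every $s\in\R$. In particular $\varphi_n(r_0)\in\mathcal{A}(r_0)$, and since $\mathcal{A}(r_0)$ is compact I pass to a subsequence with $\varphi_n(r_0)\to b$. Restricting each $\varphi_n$ to $[r_0,\infty)$ gives elements of $\mathscr{G}(r_0)$, so the LUUS property furnishes $\varphi\in\mathscr{G}(r_0)$ with $\varphi_n\to\varphi$ uniformly on compact subsets of $[r_0,\infty)$. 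Extracting a further subsequence so that $r_n\to r'\in[r_0,t]$, the continuity of the solutions together with the uniform convergence on the compact set $[r_0,t]$ gives
$$d(x_n,\varphi(r'))\leq d(\varphi_n(r_n),\varphi(r_n))+d(\varphi(r_n),\varphi(r'))\to 0,$$
so $x_n\to\varphi(r')$ along this subsequence.

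Next I would treat the case $\inf_n r_n=-\infty$; passing to a subsequence I may assume $r_n\searrow-\infty$. Here I invoke hypothesis \eqref{conv22}: since $x_n\in\mathcal{A}(r_n)$,
$$dist(x_n,\mathcal{A}_{\infty})\leq dist(\mathcal{A}(r_n),\mathcal{A}_{\infty})\to 0\quad\text{as }n\to\infty.$$
Choosing $y_n\in\mathcal{A}_{\infty}$ with $d(x_n,y_n)\leq dist(x_n,\mathcal{A}_{\infty})+\tfrac{1}{n}$ and using that $\mathcal{A}_{\infty}$, being the global attractor of the autonomous problem, is compact, I extract $y_n\to y\in\mathcal{A}_{\infty}$ and conclude $x_n\to y$. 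Since either $\inf_n r_n>-\infty$ or $\inf_n r_n=-\infty$, these two cases are exhaustive, so every sequence in $\cup_{r\leq t}\mathcal{A}(r)$ admits a convergent subsequence, which is the desired backward compactness.

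I expect the main obstacle to be the bookkeeping in the first case: one must anchor the LUUS property at the fixed time $r_0$ lying below all $r_n$ (rather than at $t$, as in the forward proof, where $r_n\geq t$ made $t$ the natural anchor), and verify that the complete orbits, which a priori start at the varying times $r_n$, do restrict to solutions in $\mathscr{G}(r_0)$ with $\varphi_n(r_0)$ ranging in the compact set $\mathcal{A}(r_0)$. The second case is a routine transcription of the forward argument, using only \eqref{conv22} and compactness of $\mathcal{A}_{\infty}$; notably, the asymptotically backward autonomous hypothesis enters only through guaranteeing that $\mathcal{A}_{\infty}$ is a well-defined compact attractor.
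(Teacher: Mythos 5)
Your proposal is correct and follows essentially the same route as the paper's proof: the same dichotomy on $\inf_n r_n$, the same anchoring of the LUUS property at $r_0$ with compactness of $\mathcal{A}(r_0)$ and convergence $r_n\to r'$ in Case 1, and the same use of \eqref{conv22} together with compactness of $\mathcal{A}_{\infty}$ in Case 2. If anything, your write-up is slightly more explicit than the paper's, since you cite quasi-invariance (Theorem \ref{teo25}) to produce the complete orbits and note that their restrictions to $[r_0,\infty)$ lie in $\mathscr{G}(r_0)$, steps the paper leaves implicit.
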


\begin{proof}
Suppose that \eqref{conv22} is true. For fixed $t\in\R$ let $\{x_n\}\subset \cup_{r\leq t}\mathcal{A}(r)$. For each $n\in\N$ there is $r_n$ such that $x_n\in A(r_n)$. Thus we have two cases:

\

\noindent\textbf{Case 1:} $r_0:=\inf_nr_n>-\infty$.

In this case there is a sequence $\{\varphi_n\}$ with $\varphi_n(r_n)=x_n$ and $\varphi_n(r_0)\in\mathcal{A}(r_0)$, as the set $\mathcal{A}(r_0)$ is compact there is $b\in X$ such that, up to a subsequence, $\varphi_n(r_0)\to b$. From the LUUS property of $\mathscr{G}$ there exist $\varphi\in\mathscr{G}(r_0)$ such that $\varphi_n\to\varphi$ on compact subsets of $[r_0,\infty)$.

As $\{r_n\}\subset [r_0,t]$ there is $r'\in [r_0,t]$ such that, up to a subsequence, $r_n\to r'$. Given $\varepsilon>0$ and from the continuity of the solutions and uniform convergence of $\varphi_n$ in $[r_0,t]$, for $n$ large enough, we get 
\begin{align*}
\|\varphi_n(r_n)-\varphi(r')\|\leq \|\varphi_n(r_n)-\varphi(r_n)\|+\|\varphi(r_n)-\varphi(r')\|<\varepsilon.
\end{align*}

\

\noindent\textbf{Case 2:} $\inf_nr_n=-\infty$.

In this case, up to a subsequence, we may assume $r_n\searrow -\infty$. We have
\begin{align*}
dist(x_n,\mathcal{A}_{\infty})\leq dist(\mathcal{A}(r_n),\mathcal{A}_{\infty})\to 0.
\end{align*}
We can choose $y_n\in\mathcal{A}_{\infty}$ such that 
$$d(x_n,y_n)\leq dist(x_n,\mathcal{A}_{\infty})+\frac{1}{n}.$$
There is $y\in\mathcal{A}_{\infty}$ such that, up to a subsequence, $y_n\to y$, which implies that $x_n\to y$.
\end{proof}


\section{An application to a reaction-diffusion equation with dynamic boundary conditions}\label{application}

In \cite{spa} where the authors have introduced the concept of selected pullback attractor, as an application of the theory the authors have considered the following problem with a nonautonomous p-Laplacian equation, 
\begin{equation}\label{P}
\left\{
\begin{array}{ll}
u_t-\Delta_pu+ f_1(t,u)=g_1(t,x), & (t,x)\in
(\tau,+\infty)\times\Omega, \cr
u_t+|\nabla u|^{p-2}\deln{u}+f_2(t,u)=g_2(t,x), & \;(t,x)\in(\tau,+\infty)\times\Gamma, \cr
u(\tau)=u_0,
\end{array}
\right. \tag{$P$}
\end{equation}
where $\Omega\subset \mathbb{R}^N$ is a bounded domain with smooth boundary $\Gamma=\partial\Omega$, $N\geq 3$ and $\Delta_p$ denotes the $p$-Laplacian operator with $p\in(2,+\infty)$. One of the hypotheses for the perturbations $f_i$ are $f_i\in C(\R^2)$ and satisfies 
\begin{equation}\label{2.4}
a_i(t)|s|^{r_i}-k_i(t)\leq f_i(t,s)s, 
\end{equation}
for a.a. $t\in\R$ and every $s\in\R$ with $a_i \in L^1_{loc} (\R)$  real functions such that $ a_i (t) \geq a_0>0 $ for some fixed real number $a_0 $, and $ k_i \in L^1_{loc} (\R) $ are positive functions, for $ i = 1,2$, besides that, there are functions $ C_i \in L^{\infty}_{loc} (\R) $, $ i = 1,2 $, such that $ |f_i (t, s)| \leq C_i(t)(|s|^{r_i-1} +1) $ a.e. for $t\in\R$ and each $s\in\R$. Some others hypotheses on the perturbations and external forces are required, see \cite{artpull} for details.

The initial state $u_0$ 
belongs to the space $\xdois$, where 
$$\mathbb{X}^2:=L^2(\Omega,dx)\times L^2(\Gamma,dS)=\{F=(f,g); f\in L^2(\Omega) \mbox{ and } g\in L^2(\Gamma)\},$$
with the norm 
$$\|F\|_{\mathbb{X}^2}=\left(\int_{\Omega}|f|^2 dx+\int_{\Gamma}|g|^2dS\right)^{\frac{1}{2}}.$$   
 This space can be identified with the space $L^2(\overline{\Omega},d\mu)$ where $d\mu=dx\oplus dS$, i.e., if $A\subset\overline{\Omega}$ is $\mu-\mbox{measurable}$, then $\mu(A)=|A\cap\Omega|+S(A\cap\Gamma)$ and $S$ is the surface measure in boundary $\Gamma$, see \cite{artpull} for more details.  
 
 The authors in \cite{artpull} could not ensure the uniqueness of solution with the assumed hypotheses and then they work with a possibility of the existence of others solutions. They have ensured the existence of a $\mathscr{D}$-pullback attractor for a generalized process compused only with solutions which are from Faedo-Galerkin method and attracts the families of sets $\{D(t): t\in\R\}$ of nonempty subsets of $L^{2r-2}(\Omega)\times L^{2r-2}(\Gamma)\subset L^2(\overline{\Omega},d\mu)$ such that
$$\lim_{s\rightarrow-\infty}e^{\theta s}[D(s)]=0,$$
where $[D(s)]=\sup\left\{\|u\|_{L^{2r-2}(\Omega)}^{2r-2}+\|v\|_{L^{2r-2}(\Gamma)}^{2r-2}:(u,v)\in D(s)\right\}$ where $r$ and $\theta>0$ are suitable constants, with $2r-2> 2$. 

The restriction on the generalized process is because of some technicalities to develop the estimates of the solution, and then the attraction is ensured only for solutions from Faedo-Galerkin method. As we do not have the uniqueness we have no guarantees that there exist another solution that it is not coming from a sequence of Faedo-Galerkin method.

 In particular, this $\mathscr{D}$-pullbak attractor when we consider a generalized process compused with any solution of the problem is a selected pullback attractor, it was observed in \cite{spa}.

First of all, consider all the assumptions given in section 5 of \cite{artpull}. Here, we will consider some additional assumptions on the perturbations of the operators and external forces, and ensure the continuity of the selected pullback attractor of the Problem \eqref{P}.  

\

\noindent\textbf{Assumption A:} For $\tilde{g}_1\in L^{2r-2}(\Omega)$ and $\tilde{g}_2\in L^{2r-2}(\Gamma)$ we have
$$\lim_{\tau\to+\infty}\int_{\tau}^{+\infty} \|g_1(\tau+s)-\tilde{g}_1\|^{2r-2}_{2r-2,\Omega}+\|g_2(\tau+s)-\tilde{g}_2\|^{2r-2}_{2r-2,\Gamma}ds=0.$$

\noindent\textbf{Assumption B:} 
\begin{equation}\label{H4.1}
\sup_{t\in\R}\int_{-\infty}^{t} e^{\theta(s-t)}(k_1(s)+k_2(s))ds<+\infty
\end{equation}
and
\begin{equation}\label{H4.2}
\sup_{t\in\R}\int_{-\infty}^{t} e^{\theta(s-t)}(k_1(s)^{r-1}+k_2(s)^{r-1})ds<+\infty.
\end{equation}
Besides that, $ C_i \in L^{\infty}(\R) $, for each $i\in\{1,2\}$.

\

Theorem~\ref{asdf} can naturally be applied for the Problem \eqref{P}. The next lemma will be necessary to obtain the asymptotic continuity, i.e., to be possible apply Theorem~\ref{qwer}.

\begin{lemma}\label{atralimit}
The Selected Pullback Attractor $\mathcal{A}=\{\mathcal{A}(t)\}_{t\in\R}$ of the Problem \eqref{P} is forward compact. 
\end{lemma}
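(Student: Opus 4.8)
The plan is to produce a single set, bounded in a space that embeds compactly into $\xdois$, that contains every section $\mathcal{A}(s)$ uniformly in $s\in\R$; forward compactness then follows immediately, since $\cup_{s\geq t}\mathcal{A}(s)$ will lie in a fixed precompact subset of $\xdois$ for each $t$. First I would recall from \cite{artpull} the a priori energy estimates satisfied by the Faedo--Galerkin solutions that generate the selected pullback attractor. Testing \eqref{P} with $u$ and exploiting the dissipativity encoded in the structure condition \eqref{2.4}, one arrives at a differential inequality of the form $\frac{d}{dt}\|u(t)\|_{\xdois}^{2}+\alpha\|u(t)\|_{\xdois}^{2}+c\,\|\nabla u(t)\|_{p}^{p}\leq h(t)$, where $\alpha,c>0$ and $h$ is governed by $k_1,k_2$ and the forces $g_1,g_2$.

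Next I would invoke the quasi-invariance granted by Theorem~\ref{teo25}: every $z\in\mathcal{A}(s)$ lies on a complete orbit $\psi$ with $\psi(s)=z$ and $\psi(r)\in\mathcal{A}(r)$ for all $r\in\R$. Integrating the inequality along $\psi$ from a time $\tau$ and letting $\tau\to-\infty$, the pullback growth bound $\lim_{r\to-\infty}e^{\theta r}[\mathcal{A}(r)]=0$ annihilates the initial-data term and leaves a bound of the shape $\|z\|_{\xdois}^{2}\leq \int_{-\infty}^{s}e^{\theta(r-s)}\big(k_1(r)+k_2(r)+\cdots\big)\,dr$, together with a companion control of $\int_{s-1}^{s}\|\nabla u\|_p^p\,dr$. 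This is exactly where Assumption B enters: the two suprema \eqref{H4.1}--\eqref{H4.2}, with $C_i\in L^{\infty}(\R)$, make the right-hand side finite \emph{uniformly in} $s$, so there is a constant $M$, independent of $s$, with $\|z\|_{\xdois}\leq M$ for every $z\in\mathcal{A}(s)$, and a matching uniform control of the gradient. A standard uniform Gronwall argument (or a second energy estimate obtained by testing with $-\Delta_p u$) then promotes the time-integrated gradient bound to a pointwise-in-time, $s$-uniform bound of the attractor in $W^{1,p}(\Omega)$, with boundary trace controlled in $W^{1-1/p,p}(\Gamma)$.

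Finally I would convert this regularity into precompactness. Since $W^{1,p}(\Omega)\hookrightarrow\hookrightarrow L^2(\Omega)$ and $W^{1-1/p,p}(\Gamma)\hookrightarrow\hookrightarrow L^2(\Gamma)$ for the parameter ranges in \eqref{P}, the uniform bound places $\cup_{s\geq t}\mathcal{A}(s)$ inside a fixed precompact subset of $\xdois=L^2(\overline{\Omega},d\mu)$. Hence $\overline{\cup_{s\geq t}\mathcal{A}(s)}$ is compact for every $t\in\R$, which is the assertion. Assumption A plays no role here; it is needed only later, to identify the forward limit set with the attractor of the limiting autonomous problem.

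I expect the crux to be the second step: upgrading the pullback estimate from a bound at each fixed time in the weak metric $\xdois$ to an $s$-uniform bound in a compactly embedded space. The attraction established in \cite{artpull} is phrased in the $L^{2r-2}$ metric, which on a bounded domain does not yield $L^2$-precompactness, so the gradient and boundary-trace regularity is indispensable; and its time-uniformity rests entirely on the finiteness of the two suprema in Assumption B, without which the pullback bound could degenerate as $s$ ranges over $[t,\infty)$.
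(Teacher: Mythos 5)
Your overall strategy is the same as the paper's: place $\bigcup_{s\geq t}\mathcal{A}(s)$ inside a set bounded in a space compactly embedded in $\xdois$ and conclude by the compact embedding. The paper does this more economically than you propose: it does not redo the energy estimates, but reuses the $\mathscr{D}$-pullback absorbing family $B(t)=B_{\vp}(0,R(t))\cap K(t)$ already constructed in \cite{artpull}, so that the whole lemma reduces to showing that the radius $R(t)$ is bounded uniformly in $t$. However, your argument has a genuine gap: the assertion that ``Assumption A plays no role here'' is wrong, and Assumption B cannot carry the weight you place on it.

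Assumption B, i.e.\ \eqref{H4.1}--\eqref{H4.2} together with $C_i\in L^{\infty}(\R)$, controls only the contributions of the nonlinearities, namely the terms built from $k_1,k_2$ and $C_1,C_2$; it says nothing about the external forces $g_1,g_2$. But the right-hand side of your differential inequality, and hence your pullback bound for $\|z\|_{\xdois}$, also contains the exponentially weighted force integrals
$\int_{-\infty}^{s}e^{\theta(\sigma-s)}\bigl(\|g_1(\sigma)\|^{2r-2}_{2r-2,\Omega}+\|g_2(\sigma)\|^{2r-2}_{2r-2,\Gamma}\bigr)d\sigma$,
and the standing hypotheses of \cite{artpull} guarantee only that these are finite for each fixed $s$: they are pullback-type conditions, fully compatible with forces that grow as $s\to+\infty$. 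Consequently the constant $M$ you extract need not be uniform over $s\in[t,\infty)$, which is exactly the point of the lemma. This is where the paper spends essentially all of its effort: writing $g_i=(g_i-\tilde g_i)+\tilde g_i$, it uses Assumption A to bound $\int_N^{t+1}\|g_i(\sigma)-\tilde g_i\|^{2r-2}d\sigma$ by $1$ uniformly in $t\geq N$, while the autonomous parts contribute $\|\tilde g_i\|^{2r-2}/\theta$; only this makes $R(t)$ uniformly bounded in $t$, hence $\bigcup_{t\geq s}\mathcal{A}(t)\subset\bigcup_{t\geq s}B_{\vp}(0,R(t))$ bounded in $\vp$ and precompact in $\xdois$. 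So Assumption A is not merely needed ``later'' to identify the limiting autonomous problem; it is the crux of this lemma, and your proof does not close without it.
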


\begin{proof}
Due to ensure the existence of $\mathscr{D}$-pullback attractor o the work \cite{artpull} the authors have showed the existence of a $\mathscr{D}$-pullback absorbing set, given by
\begin{equation}\label{B}
B:=\{B(t)\}_{t\in\R}:=\left\{B_{\vp}(0,R(t))\bigcap K(t)    \right \}_{t\in\R}.
\end{equation}
where $\vp$ is a Banach space compactly embedding in $\xdois$. What matters to us is that, assuming the Assumption B, we can write briefly  
\begin{align*}
R(t):=\bigg[C_0+C\bigg(e^{-\theta (t+1)}\int_{-\infty}^{t+1}e^{\theta s}(\|g_1(s)&\|^{2r-2}_{2r-2,\Omega}+\|g_2(s)\|^{2r-2}_{2r-2,\Gamma})ds\\ 
&+e^{-\theta t}\int_{-\infty}^{t+1}e^{\theta s}(\|g_1(s)\|^{2}_{2,\Omega}+\|g_2(s)\|^{2}_{2,\Gamma})ds\bigg)\bigg]^{\frac{1}{p}}. 
\end{align*}

From 
 assumption A, there is $N\in\N$ such that 
\begin{align*}
&\int_{N}^{\infty}
\|g_1(s)-\tilde{g}_1\|^{2r-2}_{2r-2,\Omega}+\|g_2(s)-\tilde{g}_2\|^{2r-2}_{2r-2,\Gamma}ds<1.
\end{align*}
Note that, 
\begin{align*}
&\sup_{t\geq N}\int_{-\infty}^{t+1}e^{\theta (s-(t+1))}(\|g_1(s)\|^{2r-2}_{2r-2,\Omega}+\|g_2(s)\|^{2r-2}_{2r-2,\Gamma})ds \\
&\leq \sup_{t\geq N}\int_{-\infty}^{t+1}e^{\theta (s-(t+1))}(\|g_1(s)-\tilde{g}_1\|^{2r-2}_{2r-2,\Omega}+\|\tilde{g}_1\|^{2r-2}_{2r-2,\Omega}+\|g_2(s)-\tilde{g}_2\|^{2r-2}_{2r-2,\Gamma}+\|\tilde{g}_2\|^{2r-2}_{2r-2,\Gamma})ds \\
&\leq \sup_{t\geq N}\int_{-\infty}^{t+1}e^{\theta (s-(t+1))}(\|g_1(s)-\tilde{g}_1\|^{2r-2}_{2r-2,\Omega}+\|g_2(s)-\tilde{g}_2\|^{2r-2}_{2r-2,\Gamma})ds+\frac{\|\tilde{g}_1\|^{2r-2}_{2r-2,\Omega}}{\theta} + \frac{\|\tilde{g}_2\|^{2r-2}_{2r-2,\Gamma}}{\theta} \\
&\leq \sup_{t\geq N}\bigg(\int_{-\infty}^{N}e^{\theta (s-(t+1))}(\|g_1(s)-\tilde{g}_1\|^{2r-2}_{2r-2,\Omega})ds+\int_N^{t+1}\|g_1(s)-\tilde{g}_1\|^{2r-2}_{2r-2,\Omega}ds \\
& \quad\quad \quad \quad + \int_{-\infty}^{N} e^{\theta (s-(t+1))}(\|g_2(s)-\tilde{g}_2\|^{2r-2}_{2r-2,\Gamma})ds+\int_N^{t+1}\|g_2(s)-\tilde{g}_2\|^{2r-2}_{2r-2,\Gamma}\bigg) \\
&\hspace{5cm} +\frac{\|\tilde{g}_1\|^{2r-2}_{2r-2,\Omega}}{\theta} + \frac{\|\tilde{g}_2\|^{2r-2}_{2r-2,\Gamma}}{\theta}\\
&\leq \int_{-\infty}^{N}e^{\theta s}(\|g_1(s)-\tilde{g}_1\|^{2r-2}_{2r-2,\Omega}+\|g_2(s)-\tilde{g}_2\|^{2r-2}_{2r-2,\Gamma})ds +1+\frac{\|\tilde{g}_1\|^{2r-2}_{2r-2,\Omega}}{\theta} + \frac{\|\tilde{g}_2\|^{2r-2}_{2r-2,\Gamma}}{\theta},
\end{align*}
from the hypotheses for existence of $\mathscr{D}$-pullback attractor, see section 5 of \cite{artpull}, this last line of the inequality above is bounded.

From Lemma 2.1 of \cite{artpull}, we get  
\begin{align*}
\sup_{t\geq N}\int_{-\infty}^{t+1}&e^{\theta (s-(t+1))}(\|g_1(s)\|^{2}_{2,\Omega}+\|g_2(s)\|^{2}_{2,\Gamma})ds\\
&\leq \sup_{t\geq N}\int_{-\infty}^{t+1}e^{\theta (s-(t+1))}(\|g_1(s)\|^{2r-2}_{2r-2,\Omega}+\|g_2(s)\|^{2r-2}_{2r-2,\Gamma}+K)ds\\
&\leq \sup_{t\geq N}\int_{-\infty}^{t+1}e^{\theta (s-(t+1))}(\|g_1(s)\|^{2r-2}_{2r-2,\Omega}+\|g_2(s)\|^{2r-2}_{2r-2,\Gamma})ds+\frac{K}{\theta}<+\infty.
\end{align*}
And thus, 
\begin{align*}
\sup_{t\geq N} & \ \left(e^{-\theta t}\int_{-\infty}^{t+1}e^{\theta s}(\|g_1(s)\|^{2}_{2,\Omega}+\|g_2(s)\|^{2}_{2,\Gamma})ds\right) \\
&\leq e^{\theta}\left( \sup_{t\geq N}\int_{-\infty}^{t+1}e^{\theta (s-(t+1))}(\|g_1(s)\|^{2r-2}_{2r-2,\Omega}+\|g_2(s)\|^{2r-2}_{2r-2,\Gamma})ds+\frac{K}{\theta}\right)
\end{align*}
which in turn is bounded.

It means that $R(t)$ is uniformly bounded for $t\in\R$, and then
 $$\bigcup_{t\geq s}\mathcal{A}(t)\subset \bigcup_{t\geq s}B(t)\subset \bigcup_{t\geq s}B_{\vp}(0,R(t))$$
is bounded in $\vp$ for each $s\in\R$. Therefore, $\cup_{t\geq s}\mathcal{A}(t)$ is precompact in $\xdois$ for each $s\in\R$.  
\end{proof}

In the proof of the previous Lemma the $\mathscr{D}$-pullback absorbing set $B$ absorbs only the solutions that coming from Faedo-Galerking method, because of this it ensures just the existence of selected pullback attractor when we consider the generalized process $\mathscr{G}$ composed with all solutions of the Problem \eqref{P}. 

As an immediate consequence of Theorem~\ref{qwer} we obtain that 
 \begin{equation}
lim_{t\to\infty}dist(\mathcal{A}(t),\mathcal{A}(\infty))=0,
\end{equation}
where $\mathcal{A}(\infty):=\cap_{t\in\R}\overline{\cup_{r\geq t}\mathcal{A}(r)}$.

\subsection{The asymptotically autonomous case} An autonomous version of the Problem \eqref{P} were considered in \cite{galwarma} and \cite{gal}, where they considered the following problem  
\begin{equation}\label{Pa}
\left\{
\begin{array}{ll}
u_t-\Delta_pu+ \tilde{f}_1(u)=\tilde{g}_1(x), & (t,x)\in
(0,+\infty)\times\Omega, \cr
u_t+|\nabla u|^{p-2}\deln{u}+\tilde{f}_2(u)=\tilde{g}_2(x), & \;(t,x)\in(0,+\infty)\times\Gamma, \cr
u(0)=u_0,
\end{array}
\right. \tag{$Pa$}
\end{equation}
with a similar assumptions assumed in \cite{artpull}, but they assumed an additional assumption that the derivatives of the functions $\tilde{f}_i$ are bounded. This additional assumption ensure the uniqueness of solution. The authors of these works have studied the forward asymptotic behavior of solutions, therefore this problem posses a global attractor $\mathcal{A}_{\infty}$ in $\xdois$.  

Let us now suppose one more condition:

\

\noindent\textbf{Assumption C:} For each $\tau\in\R$ there exists a function $\alpha_{\tau}:[0,+\infty)\to [0,+\infty)$ such that $\alpha_{\tau}(t)\to 0$ as $\tau\to +\infty$ for each $t\in[0,+\infty)$ and 
$$\begin{array}{l}
\gl f_1((t+\tau),u(t+\tau))-\tilde{f}_1(v(t)),u(t+\tau)-v(t)\gr_{L^2(\Omega)}\geq -\alpha_{\tau}(t)\\
\mbox{and }\\
\gl f_2((t+\tau),\gamma(u)(t+\tau))-\tilde{f}_2(\gamma(v)(t)),\gamma(u)(t+\tau)-\gamma(v)(t)\gr_{L^2(\Gamma)}\geq -\alpha_{\tau}(t)
\end{array}$$
 for all  $t\in\R^+, \tau\in\R,$ and any solutions $U:=(u,\gamma(u))$ of the Problem \eqref{P} and $V:=(v,\gamma(v))$ of the Problem  \eqref{Pa}.

\begin{lemma}\label{convU} Suppose that assumptions \textbf{A} and \textbf{C} are satisfied. Then a solution $U=(u,\gamma(u))$ of the problem \eqref{P} with initial condition in $U_{\tau}\in\xdois$ converges to the solution $V=(v,\gamma(v))$ of problem \eqref{Pa} with initial condition in $V_0\in\xdois$, in the following sense:
$$\|U(\tau+t,\tau,U_{\tau})-V(t,V_0)\|_{\xdois}\to 0 \mbox{ as }\tau \to +\infty \mbox{ for each }t\geq 0,$$
whenever $\|U_{\tau}-V_0\|_{\xdois}\to 0$ as $\tau\to+\infty$.
\end{lemma}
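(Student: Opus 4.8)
The plan is to derive an energy estimate for the difference $w(t):=U(\tau+t,\tau,U_\tau)-V(t,V_0)$ in $\xdois$ and then pass to the limit $\tau\to+\infty$ by Gronwall's inequality. First I would write the system satisfied by $w$: subtracting the equations of \eqref{Pa} from those of \eqref{P} evaluated along $s=\tau+t$, the interior part reads $w_t-(\Delta_p u(\tau+t)-\Delta_p v(t))+(f_1(\tau+t,u(\tau+t))-\tilde{f}_1(v(t)))=g_1(\tau+t)-\tilde{g}_1$, with the analogous identity on $\Gamma$ for the traces. Testing the interior identity against $u(\tau+t)-v(t)$ in $L^2(\Omega)$ and the boundary identity against $\gamma(u)(\tau+t)-\gamma(v)(t)$ in $L^2(\Gamma)$ and adding, the time-derivative terms combine into $\frac{1}{2}\frac{d}{dt}\|w(t)\|^2_{\xdois}$.

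Next I would dispose of the diffusion terms, which is where the structure of the dynamic boundary condition enters. Integrating the interior $p$-Laplacian by parts produces a boundary flux $\int_\Gamma |\nabla u|^{p-2}\deln u\,(\gamma u-\gamma v)\,dS$ that cancels exactly against the term arising from testing the dynamic boundary equation on $\Gamma$; what survives is the single monotone form $\int_\Omega(|\nabla u(\tau+t)|^{p-2}\nabla u(\tau+t)-|\nabla v(t)|^{p-2}\nabla v(t))\cdot\nabla(u(\tau+t)-v(t))\,dx\geq 0$, so these contributions have the favorable sign and may be discarded. For the reaction terms I would invoke Assumption \textbf{C}, which bounds each of the two pairings $\gl f_i(\cdot,\cdot)-\tilde{f}_i(\cdot),\cdot\gr$ below by $-\alpha_\tau(t)$; moving them to the right-hand side costs only $+2\alpha_\tau(t)$. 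For the forcing terms I would use Cauchy--Schwarz followed by Young's inequality, absorbing the $\|w\|_{\xdois}$ factor, so that they are controlled by $\frac{1}{2}\|w(t)\|^2_{\xdois}+\frac{1}{2}(\|g_1(\tau+t)-\tilde{g}_1\|^2_{2,\Omega}+\|g_2(\tau+t)-\tilde{g}_2\|^2_{2,\Gamma})$. Setting $y(t):=\|w(t)\|^2_{\xdois}$, this yields a differential inequality of the form $y'(t)\leq y(t)+4\alpha_\tau(t)+\|g_1(\tau+t)-\tilde{g}_1\|^2_{2,\Omega}+\|g_2(\tau+t)-\tilde{g}_2\|^2_{2,\Gamma}$.

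Gronwall's inequality on the fixed interval $[0,t]$ then gives $y(t)\leq e^t y(0)+\int_0^t e^{t-s}[4\alpha_\tau(s)+\|g_1(\tau+s)-\tilde{g}_1\|^2_{2,\Omega}+\|g_2(\tau+s)-\tilde{g}_2\|^2_{2,\Gamma}]\,ds$. I would finish by letting $\tau\to+\infty$ with $t$ fixed: the term $e^t y(0)=e^t\|U_\tau-V_0\|^2_{\xdois}$ vanishes by hypothesis; the $\alpha_\tau$ term vanishes by the pointwise decay $\alpha_\tau(s)\to 0$ of Assumption \textbf{C} together with dominated convergence on $[0,t]$; and for the forcing term I would use the continuous embedding $L^{2r-2}\hookrightarrow L^2$ on the bounded $\Omega$ and $\Gamma$ (valid since $2r-2>2$) to pass to the $L^{2r-2}$ norms, then change variables $\sigma=\tau+s$ to write $\int_0^t\|g_1(\tau+s)-\tilde{g}_1\|^{2r-2}_{2r-2,\Omega}ds=\int_\tau^{\tau+t}\|g_1(\sigma)-\tilde{g}_1\|^{2r-2}_{2r-2,\Omega}d\sigma$, which is dominated by the tail appearing in Assumption \textbf{A} and hence tends to $0$. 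This gives $y(t)\to 0$, the claimed convergence.

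The main obstacle I anticipate is technical rather than conceptual, and is twofold: first, justifying the energy testing itself for the possibly non-unique weak solutions of \eqref{P}, which rests on the regularity afforded by the Faedo--Galerkin construction of \cite{artpull}; second, securing enough uniformity to move the limit inside the $\alpha_\tau$-integral, since Assumption \textbf{C} supplies only pointwise decay and an integrable domination of $\{\alpha_\tau\}$ on $[0,t]$ is needed to invoke dominated convergence. Reconciling the integration limits in Assumption \textbf{A} with the finite interval $[0,t]$ through the change of variables above is the step that ties the decay of the external forces to the hypothesis.
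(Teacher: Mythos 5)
Your proposal follows essentially the same route as the paper's proof: test the weak formulations of \eqref{P} and \eqref{Pa} against the difference $U(t+\tau)-V(t)$, discard the $p$-Laplacian contribution by monotonicity (the paper invokes the Tartar inequality here), control the reaction pairings by Assumption C, treat the forcings with Young's inequality, and close with Gronwall exactly as in your displayed estimate. The differences lie entirely in the passage to the limit $\tau\to+\infty$, and there your choices diverge from (and partly improve on) the paper's. For the forcing term the paper enlarges the Gronwall integral to $[0,+\infty)$, invokes Lemma 2.1 of \cite{artpull} to bound $\|\cdot\|^2_{2}$ by $\|\cdot\|^{2r-2}_{2r-2}+K$, and then disposes of $\int_\tau^{+\infty}K\,ds$ by the manipulation $\lim_{a\to\infty}Ka-\lim_{\tau\to\infty}K\tau=0$, which is a difference of infinities and does not stand as written; your device of staying on the finite interval $[0,t]$ and changing variables to $\int_\tau^{\tau+t}$, dominated by the tail in Assumption A, avoids that defect. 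Note, however, your own glossed step: Gronwall produces $\|g_i(\tau+s)-\tilde g_i\|^{2}_{2}$ while Assumption A controls the $(2r-2)$-th power of the $L^{2r-2}$ norm, so after the spatial embedding you still need a H\"older inequality in time (exponents $r-1$ and $(r-1)/(r-2)$) on $[0,t]$ — not merely a change of variables — to reach the quantity Assumption A controls. For the $\alpha_\tau$ term, the paper bounds it by $\frac{4t}{c}\operatorname{esssup}_{s\in[0,+\infty)}\alpha_\tau(s)$ and asserts this vanishes, which tacitly strengthens Assumption C from pointwise to uniform decay; your dominated-convergence route has the matching defect (no dominating function is supplied), and you are right to flag it — either reading requires slightly more than Assumption C literally states.
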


Before the proof, note that in this Lemma $U(\tau+t,\tau,U_{\tau})$ represents any solution of the Problem \eqref{P}, even if this solution it is not coming from a Faedo-Galerkin method. Let us go to the proof of the previous Lemma.

\

\begin{proof} Let $U$ a solution of the Problem \eqref{P} and $V$ a solution of the Problem \eqref{Pa}. From the definition of weak solution of the Problem \eqref{P} multiplying by $U(t+\tau)-V(t)$, we have
\begin{equation}\label{fraca1}
\begin{array}{ll}
\gl \partial_tU(t+\tau) ,U-V \gr_{\xdois}  +\gl|\nabla u|^{p-2}\nabla u,\nabla u-v\gr_{L^2(\Omega)}\cr
\quad + \gl f_1(t+\tau,u),u(t+\tau)-v(t)\gr_{L^2(\Omega)}+\gl f_2(t+\tau,v),\gamma(u)(t+\tau)-\gamma(v)(t))\gr_{L^2(\Gamma)} \cr
\quad\quad\quad= \gl g_1(t+\tau),u(t+\tau)-v(t)\gr_{L^2(\Omega)}+\gl g_2(t+\tau),\gamma(u)(t+\tau)-\gamma(v)(t)\gr_{L^2(\Gamma)}
\end{array}
\end{equation}        
and for the Problem \eqref{Pa} with solution $V$, we get 
\begin{equation}\label{fraca2}
\begin{array}{ll}
\gl \partial_tV(t) ,U(t+\tau)-V(t) \gr_{\xdois}  +\gl|\nabla v|^{p-2}\nabla v,\nabla u-v\gr_{L^2(\Omega)}\cr
\quad\quad\quad + \gl \tilde{f}_1(v(t)),u(t+\tau)-v(t)\gr_{L^2(\Omega)}+\gl \tilde{f}_2(v(t)),\gamma(u)(t+\tau)-\gamma(v)(t))\gr_{L^2(\Gamma)} \cr
\quad\quad\quad\quad\quad\quad\quad= \gl \tilde{g}_1,u(t+\tau)-v(t)\gr_{L^2(\Omega)}+\gl \tilde{g}_2,\gamma(u)(t+\tau)-\gamma(v)(t)\gr_{L^2(\Gamma)}
\end{array}
\end{equation}        
subtracting \eqref{fraca1} of \eqref{fraca2},
 \begin{equation*}\label{fraca3}
\begin{array}{ll}
\frac{1}{2}\frac{d}{dt}\|U(t+\tau)-V(t)\|^2_{\xdois}  +\gl|\nabla u|^{p-2}\nabla u-|\nabla v|^{p-2}\nabla v,\nabla u-v\gr_{L^2(\Omega)}\cr
\quad\quad\quad + \gl f_1(t+\tau,u(t+\tau))-\tilde{f}_1(v(t)),u(t+\tau)-v(t)\gr_{L^2(\Omega)} \cr
\quad\quad \quad \quad +\gl f_2((t+\tau),u(t+\tau))-\tilde{f}_2(v(t)),\gamma(u)(t+\tau)-\gamma(v)(t))\gr_{L^2(\Gamma)} \cr
\quad\quad\quad\quad\quad\quad\quad= \gl g_1(t+\tau)-\tilde{g}_1,u(t+\tau)-v(t)\gr_{L^2(\Omega)} \cr
\quad\quad\quad\quad\quad\quad\quad\quad +\gl g_2(t+\tau)-\tilde{g}_2,\gamma(u)(t+\tau)-\gamma(v)(t)\gr_{L^2(\Gamma)}
\end{array}
\end{equation*}

From Tartar inequality, assumption C and Young's Inequality
$$\frac{d}{dt}\|U(t+\tau)-V(t)\|^2_{\xdois}\leq 4\alpha_{\tau}(t)+c\|U(t+\tau)-V(t)\|^2_{\xdois}+\|g_1(t+\tau)-\tilde{g}_1\|^2_{2,\Omega}+\|g_2(t+\tau)-\tilde{g}_2\|^2_{2,\Gamma}$$
The Gronwall Lemma gives 
\begin{align*}
\|U(t+\tau)-V(t)\|_{\xdois}&\leq e^{ct}\|U_{\tau}-V_0\|_{\xdois}\\
&\quad +\int_0^te^{c(t-s)}(4\alpha_{\tau}(s)+\|g_1(s+\tau)-\tilde{g}_1\|^2_{2,\Omega}+\|g_2(s+\tau)-\tilde{g}_2\|^2_{2,\Gamma})ds \\
&\leq e^{ct}\bigg(\|U_{\tau}-V_0\|_{\xdois}+\frac{4t}{c}esssup_{s\in[0,+\infty)}\alpha_{\tau}(s) \\
&\quad \quad + \int_0^{+\infty} e^{-cs}(\|g_1(s+\tau)-\tilde{g}_1\|^2_{2,\Omega}+\|g_2(s+\tau)-\tilde{g}_2\|^2_{2,\Gamma})ds\bigg)
\end{align*} 
and, from Lemma 2.1 of \cite{artpull}, we have 
\begin{align*}
\int_0^{+\infty} e^{-cs}&(\|g_1(s+\tau)-\tilde{g}_1\|^2_{2,\Omega}+\|g_2(s+\tau)-\tilde{g}_2\|^2_{2,\Gamma})ds \\
&\leq \int_{\tau}^{+\infty} (\|g_1(s)-\tilde{g}_1\|^2_{2,\Omega}+\|g_2(s)-\tilde{g}_2\|^2_{2,\Gamma})ds \\
&\leq  \int_{\tau}^{+\infty} \|g_1(s)-\tilde{g}_1\|^{2r-2}_{2r-2,\Omega}+\|g_2(s)-\tilde{g}_2\|^{2r-2}_{2r-2,\Gamma}+K ds
\end{align*}
and 
\begin{equation*}\label{limitK}
\begin{split}
\lim_{\tau\to \infty}\int_{\tau}^{\infty}Kds=\lim_{\tau\to\infty}\left(\lim_{a\to\infty}Ka-K\tau\right)=\lim_{a\to\infty}Ka-\lim_{\tau\to\infty}K\tau=\lim_{a\to\infty}Ka-Ka=0.
\end{split}
\end{equation*}

Therefore, when $\tau\to +\infty$ we have that 
\begin{align*}
&\|U_{\tau}-V_0\|_{\xdois} \to 0 \hspace{2cm}\mbox{and} \hspace{2cm}\frac{4t}{c}ess\sup_{s\in[0,+\infty)}\alpha_{\tau}(s)\to 0.
\end{align*}
and, from assumption A, we have that $U(t+\tau)$ converges to $V(t)$ in $\xdois$ when $\tau\to\infty$ for each $t\in\R^+$. 
\end{proof}

From Lemma \ref{convU} we have that the generalized process $\mathscr{G}$ is asymptotically autonomous for $\mathcal{A}$, and from Lemma \ref{atralimit}  we have that $\mathcal{A}$ is forward compact. Therefore the conditions of Theorem \ref{TeoX} are satisfied, and then we can ensure the following result.

\begin{theorem} Suppose the Assumptions A, B and C are satisfied. Then 
$$\lim_{t\to \infty}dist(\mathcal{A}(t),\mathcal{A}_{\infty})=0.$$
\end{theorem}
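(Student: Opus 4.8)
The plan is to verify the two hypotheses of Theorem~\ref{TeoX} — that $\mathscr{G}$ is asymptotically autonomous for $\mathcal{A}$ and that $\mathcal{A}$ is forward compact — and then to invoke that theorem directly. Both hypotheses have essentially been prepared by the two preceding lemmas, so the argument is an assembly of established facts rather than a fresh computation.

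First I would establish the asymptotic autonomy. Fix a sequence $\{x_{\tau}\}$ with $x_{\tau}\in\mathcal{A}(\tau)$ and $x_{\tau}\to x_0$ as $\tau\to+\infty$. Since $\mathcal{A}$ is quasi-invariant (Theorem~\ref{teo25}), for each $\tau$ there is a complete trajectory $\varphi_{\tau}$ with $\varphi_{\tau}(\tau)=x_{\tau}$ and $\varphi_{\tau}(s)\in\mathcal{A}(s)$ for all $s\in\R$; in particular each $\varphi_{\tau}$ is a solution of \eqref{P} with initial datum $U_{\tau}:=x_{\tau}$. Applying Lemma~\ref{convU} with $U_{\tau}=x_{\tau}$ and $V_0=x_0$ — the hypothesis $\|U_{\tau}-V_0\|_{\xdois}\to 0$ being exactly $x_{\tau}\to x_0$ — yields the solution $V(\cdot,x_0)$ of the autonomous problem \eqref{Pa} with $V(0,x_0)=x_0$ and $\|\varphi_{\tau}(t+\tau)-V(t,x_0)\|_{\xdois}\to 0$ as $\tau\to+\infty$ for each $t\geq 0$. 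Setting $\varphi:=V(\cdot,x_0)$, this is precisely the defining property of $\mathscr{G}$ being asymptotically autonomous for $\mathcal{A}$.

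Next, forward compactness of $\mathcal{A}$ is exactly the content of Lemma~\ref{atralimit}, which holds under Assumptions A and B. Since the global attractor $\mathcal{A}_{\infty}$ of \eqref{Pa} exists in $\xdois$ by the results of \cite{galwarma,gal}, both hypotheses of Theorem~\ref{TeoX} are met, and that theorem immediately gives $\lim_{t\to\infty}dist(\mathcal{A}(t),\mathcal{A}_{\infty})=0$, as claimed.

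The genuine analytic difficulty is already absorbed into the two lemmas: the energy identity, the Tartar inequality, the Gronwall step, and the control of the external-force terms through Assumption A all live inside Lemma~\ref{convU}, while the uniform bound on $R(t)$ lives inside Lemma~\ref{atralimit}. Consequently the only point here that will require care is reconciling the definition of asymptotic autonomy, which is phrased for complete trajectories lying in the attractor, with the more general convergence statement of Lemma~\ref{convU}, which is phrased for arbitrary solutions of \eqref{P}. I expect this to be the one subtle step, and it is resolved by using quasi-invariance to select the in-attractor trajectory $\varphi_{\tau}$ as the particular solution of \eqref{P} to which Lemma~\ref{convU} is applied.
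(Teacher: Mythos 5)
Your proposal is correct and takes essentially the same route as the paper: the paper likewise obtains asymptotic autonomy of $\mathscr{G}$ for $\mathcal{A}$ from Lemma~\ref{convU} (applied to the in-attractor complete trajectories furnished by quasi-invariance), takes forward compactness from Lemma~\ref{atralimit}, and concludes by invoking Theorem~\ref{TeoX}. The only difference is that you spell out the reconciliation between Lemma~\ref{convU} and the definition of asymptotic autonomy, a step the paper leaves implicit in a one-sentence remark preceding the theorem.
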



\


\end{document}